\newtheorem{theorem}{Theorem}[section]
\newtheorem{lemma}[theorem]{Lemma}
\newtheorem{corollary}[theorem]{Corollary}
\numberwithin{equation}{section}
\def\C{\mathbb C}
\def\R{\mathbb R}
\def\S{\mathbb S}
\def\l{\langle}
\def\r{\rangle}
\newcommand{\I}{\textsl{i}}
\newcommand{\md}{\mathrm{d}}
\newcommand{\wh}{\widehat}
\newcommand{\lb}{\left(}
\newcommand{\vp}{\varphi}
\newcommand{\rb}{\right)}
\newcommand{\Sc}{\mathcal{S}}
\newcommand{\Rb}{\mathbb{R}}
\newcommand{\Sb}{\mathbb{S}}
\newcommand{\Beq}{\begin{equation}}
\newcommand{\Eeq}{\end{equation}}
\newcommand{\beq}{\begin{equation*}}
\newcommand{\eeq}{\end{equation*}}
\newcommand{\bal}{\begin{align}}
\newcommand{\eal}{\end{align}}
\newcommand{\bpr}{\begin{proof}}
	\newcommand{\epr}{\end{proof}}
\newcommand{\bel}[1]{\begin{equation}\label{#1}}
\newcommand{\ee}{\end{equation}}
\begin{document}
	
	\title[Momentum ray transforms]{Momentum ray transforms}
	\author[V. P. Krishnan, R. Manna, S. K. Sahoo and V. Sharafutdinov]{Venkateswaran P.\ Krishnan$^\ast$$^\dagger$, Ramesh Manna$^\dagger$, Suman Kumar Sahoo$^\dagger$ and Vladimir A.\ Sharafutdinov$^\sharp$}
	
	\subjclass{Primary: 44A12, 65R32; Secondary: 46F12.}
	\keywords{Ray transform, Reshetnyak formula, inverse problems, tensor analysis.}
	
	\thanks{The first author was supported by US NSF grant DMS 1616564.}
	\thanks{The second author was supported by SERB National Postdoctoral fellowship, PDF/2017/002780.}
	\thanks{First three authors were supported by  Airbus Corporate Foundation Chair grant
		``Mathematics of Complex Systems'' established at TIFR CAM and TIFR ICTS,
		Bangalore, India.}
	\thanks{The work was started when the last author visited TIFR CAM January 2017. The author is grateful to the institute for the support and hospitality.}
	\thanks{The last author was supported by RFBR, Grant 17-51-150001.}
	
	\email{vkrishnan@tifrbng.res.in,ramesh@tifrbng.res.in,suman@math.tifrbng.res.in,\newline\indent sharaf@math.nsc.ru}
	\address{$^\ast$Corresponding author
		\newline\indent$^\dagger$TIFR Centre for Applicable Mathematics, Sharada Nagar, Chikkabommasandra,\newline\indent\hspace{0mm} Yelahanka New Town, Bangalore, India
		\newline\indent$^\sharp$Sobolev Institute of Mathematics; 4 Koptyug Avenue, Novosibirsk, 630090, Russia;
		\newline\indent\hspace{0mm} Novosibirsk State University, 2 Pirogov street, 630090, Russia}

	
	
	\begin{abstract}
		The momentum ray transform $I^k$ integrates a rank $m$ symmetric tensor field $f$ over lines with the weight $t^k$:
		$
		(I^k\!f)(x,\xi)=\int_{-\infty}^\infty t^k\l f(x+t\xi),\xi^m\r\,\md t.
		$
		In particular, the ray transform $I=I^0$ was studied by several authors since it had many tomographic applications. We present an algorithm for recovering $f$ from the data $(I^0\!f,I^1\!f,\dots, I^m\!f)$. In the cases of $m=1$ and $m=2$, we derive the Reshetnyak formula that expresses $\|f\|_{H^s_t({\R}^n)}$ through some norm of $(I^0\!f,I^1\!f,\dots, I^m\!f)$. The $H^{s}_{t}$-norm is a modification of the Sobolev norm weighted differently at high and low frequencies. Using the Reshetnyak formula, we obtain a stability estimate.
	\end{abstract}
	
	\maketitle
	\section{Introduction}
	
	The ray transform integrates symmetric tensor fields over straight lines. Let $\l\cdot,\cdot\r$ be the standard dot-product on ${\mathbb R}^n$ and $|\cdot|$, the corresponding norm.
	The family of oriented straight lines in ${\mathbb R}^n$ is parameterized by points of the manifold
	$$
	T{\mathbb S}^{n-1}=\{(x,\xi)\in{\mathbb R}^n\times{\mathbb R}^n\mid |\xi|=1,\langle x,\xi\rangle=0\}\subset{\mathbb R}^n\times{\mathbb R}^n
	$$
	that is the tangent bundle of the unit sphere ${\mathbb S}^{n-1}$. Namely, a point $(x,\xi)\in T{\mathbb S}^{n-1}$ determines the line $\{x+t\xi\mid t\in{\mathbb R}\}$. Along with the space $C^\infty(T{\mathbb S}^{n-1})$ of smooth functions, we use the Schwartz space ${\mathcal S}(T{\mathbb S}^{n-1})$. Observe that the space ${\mathcal S}(E)$ is well defined for a smooth vector bundle $E\rightarrow M$ over a compact manifold $M$.
	
	Let $S^{m}\Rb^{n}$ be the complex vector space of rank $m$ symmetric tensors on ${\R}^n$. The dimension of $S^{m}\Rb^{n}$ is ${n+m-1}\choose m$.
	In particular, $S^0\Rb^{n}=\C$ and $S^1\Rb^{n}={\C}^n$.
	Let
	$\Sc(\Rb^{n}; S^{m}\Rb^{n})$ be the Schwartz space of $S^{m}\Rb^{n}$-valued functions that are called {\it rank $m$ smooth fast decaying symmetric tensor fields} on ${\R}^n$. The {\it ray transform} is the linear bounded operator
	\begin{equation}
	I: \Sc(\Rb^{n}; S^{m}\Rb^{n})\to \Sc (T\Sb^{n-1})
	\label{1.1}
	\end{equation}
	that is defined, for $f=(f_{i_1\dots i_m})\in\Sc(\Rb^{n}; S^{m}\Rb^{n})$, by
	\begin{equation}
	If (x,\xi)=\int\limits_{-\infty}^\infty f_{i_1\dots i_m}(x+t\xi)\,\xi^{i_1}\dots\xi^{i_m}\,\md t=\int\limits_{-\infty}^\infty \l f(x+t\xi),\xi^m\r\,\md t\quad\big((x,\xi)\in T{\S}^{n-1}\big).
	\label{1.2}
	\end{equation}
	We use the Einstein summation rule: the summation from 1 to $n$ is assumed over every index repeated in lower and upper positions in a monomial.
	To adopt our formulas to the Einstein summation rule, we use either lower or upper indices for denoting coordinates of vectors and tensors. For instance, $\xi^i=\xi_i$ in \eqref{1.2}. There is no difference between covariant and contravariant tensors since we use Cartesian coordinates only.
	Being initially defined by \eqref{1.2} on smooth fast decaying tensor fields, the operator \eqref{1.1} then extends to some wider spaces of tensor fields.

	The study of the ray transform is motivated by several applications.
	In the case of $m=0$ (when $f$ is a function), the ray transform is the main mathematical tool of Computer Tomography.
	In the case of $m=1$ (when $f$ is a vector field), the operator $I$ is called the {\it Doppler transform} and serves as the main mathematical tool of Doppler Tomography. In the cases of $m=2$ and of $m=4$, the operator $I$ and some its relatives are applied to various problems of tomography of anisotropic media, see \cite[Chapters 6,7]{mb} and \cite{LS,SW}.

	The operator $I$ has a big null-space in the case of $m>0$. A symmetric tensor field can be uniquely decomposed into its solenoidal and potential parts \cite[Theorem 2.6.2]{mb}, and the potential part lies in the null-space. Given $If$, one can recover only the solenoidal part of $f$, and there is a reconstruction formula \cite[Theorem 2.12.2]{mb}. This naturally leads to the question of what additional information should be added to the data $If$ for the unique recovery of the entire tensor field $f$. One possibility is to consider the {\it momentum ray transforms}
	$$
	I^k:{\mathcal S}({\R}^n;S^m{\R}^n)\rightarrow{\mathcal S}(T{\S}^{n-1})
	$$
	that are defined for $k=0,1,\dots$ as follows:
	\begin{equation}
	(I^k\!f)(x,\xi)=\int\limits_{-\infty}^\infty t^k\l f(x+t\xi),\xi^m\r\,\md t\quad\big((x,\xi)\in T{\S}^{n-1}\big).
	\label{1.3}
	\end{equation}
	In particular, $I^0=I$. A rank $m$ symmetric tensor field $f$ is uniquely determined by the functions $(I^0\!f,I^1\!f,\dots, I^m\!f)$, see \cite[Theorem 2.17.2]{mb} and \cite{AM}.
	
	The momentum ray transforms are primary objects of study of this paper, and we have three goals:
	
	(1) To obtain an algorithm for recovering a rank $m$ symmetric tensor field $f$ from the data $(I^0\!f,I^1\!f,\dots, I^m\!f)$.
	
	(2) To derive a version of the Reshetnyak formula \cite{Sh3} that expresses the norm $\|f\|_{H^s_t}$ through some norms of the functions $(I^0\!f,I^1\!f,\dots I^m\!f)$. The $H^{s}_{t}$-norm is a modification of the Sobolev norm weighted differently at high and low frequencies, see Section 2 for the precise definition.
	
	(3) To obtain stability estimates in terms of $H^s_t$-norms.
	
	The first goal is achieved for arbitrary $m$ in any dimension $n\ge2$. The Reshetnyak formula and stability estimate are obtained in the cases of $m=1$ and $m=2$ only. We believe our approach works for any $m$, but the bulkiness of the  Reshetnyak formula grows very fast with $m$.

	The paper is organized as follows.
	In Section \ref{Prelim}, we discuss basic properties of momentum ray transforms and state a few preliminaries. Section \ref{Inversion} presents the inversion algorithm. Section \ref{Reshetnyak} is devoted to the Reshetnyak formula and stability estimates. Finally, in Section \ref{2D}, we restrict ourselves to the 2-dimensional case and propose an alternate approach based on the fact that there are natural coordinates on $T{\S}^1={\S^1}\times{\R}$.

	\section{Preliminaries}\label{Prelim}
	
	\subsection{Basic properties of momentum ray transforms}
	
	First of all we observe that the right-hand side of \eqref{1.3} makes sense for all $(x,\xi)\in{\R}^n\times({\R}^n\setminus\{0\})$. We define the continuous linear operators
	\begin{equation}
	J^k:{\mathcal S}({\R}^n;S^m{\R}^n)\rightarrow C^\infty(T{\S}^{n-1})\quad(k=0,1,2,\dots)
	\label{2.1}
	\end{equation}
	by
	\begin{equation}
	(J^kf)(x,\xi)=\int\limits_{-\infty}^\infty t^k\l f(x+t\xi),\xi^m\r\,\md t\quad\mbox{for}\quad(x,\xi)\in{\R}^n\times({\R}^n\setminus\{0\}).
	\label{2.2}
	\end{equation}
	The data $(I^0\!f,I^1\!f,\dots, I^m\!f)$ and $(J^0\!f,J^1\!f,\dots, J^m\!f)$ are equivalent as we will demonstrate right now. Therefore the operators \eqref{2.1} are also called {\it momentum ray transforms}. The function $J^k\!f$ is sometimes more convenient than $I^k\!f$ because the partial derivatives $\frac{\partial(J^k\!f)}{\partial x^i}$ and $\frac{\partial(J^k\!f)}{\partial\xi^i}$ are well defined. On the other hand, the function $(I^k\!f)(x,\xi)$ obeys good decay conditions in the second argument.
	
	For a tensor field $f\in{\mathcal S}({\R}^n;S^m{\R}^n)$, the function $(J^k\!f)(x,\xi)$ possesses the following homogeneity in the second argument
	\begin{equation}
	(J^k\!f)(x,t\xi)=\frac{t^{m-k}}{|t|}(J^k\!f)(x,\xi)\quad(0\neq t\in{\R})
	\label{2.3}
	\end{equation}
	and has the following property in the first argument
	\begin{equation}
	(J^k\!f)(x+t\xi,\xi)=\sum\limits_{\ell=0}^k{k\choose\ell}(-t)^{k-\ell}(J^\ell\!f)(x,\xi)\quad(t\in{\R}).
	\label{2.4}
	\end{equation}
	Compare with \cite[Formulas (2.1.11)--(2.1.12)]{mb}. These properties easily follow from the definition \eqref{2.2}.
	
	Comparing \eqref{1.3} and \eqref{2.2}, we see that
	\begin{equation}
	I^k\!f=J^k\!f|_{T{\S}^{n-1}}.
	\label{2.5}
	\end{equation}
	On the other hand,
	\begin{equation}
	(J^k\!f)(x,\xi)=|\xi|^{m-2k-1}\sum\limits_{\ell=0}^k(-1)^{k-\ell}{k\choose\ell}|\xi|^\ell\l\xi,x\r^{k-\ell}\,(I^\ell\!f)
	\Big(x-\frac{\l\xi,x\r}{|\xi|^2}\xi,\frac{\xi}{|\xi|}\Big)
	\label{2.6}
	\end{equation}
	as easily follows from \eqref{2.3}--\eqref{2.4}. Compare with \cite[Formula (2.1.13)]{mb}.
	
	Formulas \eqref{2.4} and \eqref{2.6} mean in particular that the operator $I^k$ must always be  considered together with lower order momenta $(I^0,\dots,I^{k-1})$, i.e., the data $(I^0f,\dots,I^kf)$ must always be used instead of $I^kf$.
	
	There are two important first order differential operators on symmetric tensor fields: the {\it inner derivative} operator
	$\md:C^\infty({\R}^n;S^{m}{\R}^n)\to C^\infty({\R}^n;S^{m+1}{\R}^n)$ and the {\it divergence} operator $\delta:C^\infty({\R}^n;S^{m}{\R}^n)\to C^\infty({\R}^n;S^{m-1}{\R}^n)$, see the definitions in \cite{mb,Sh3}.
	Operators  $I^k$ are related to the inner derivative by the formula
	$$
	I^k(df)=-kI^{k-1}\!f
	$$
	which is valid at least for $f\in {\mathcal S}({\R}^n;S^m{\R}^n)$. This easily follows from \eqref{2.2} with the help of integration by parts. In particular, $I^k(d^\ell f)=0$ for $k<\ell$.
	
	Let us also mention the transformation law for operators $I^k$ under a change of the origin in ${\R}^n$. Given $a\in{\R}^n$, set $f_a(x)=f(x+a)$. As easily follows from \eqref{2.5} and \eqref{2.6},
	\begin{equation}
	(I^k\!f_a)(x,\xi)=\sum\limits_{\ell=0}^k(-1)^{k-\ell}{k\choose\ell}\l a,\xi\r^{k-\ell}\,(I^\ell\! f)(x+a-\l a,\xi\r\xi,\xi)\quad\mbox{for}\quad
	(x,\xi)\in T{\S}^{n-1}.
	\label{2.7}
	\end{equation}
	We are going to derive a formula that expresses $\|f\|_{H^s_t}$ through $(I^0f,I^1f,\dots I^mf)$. Since $\|f\|_{H^s_t}=\|f_a\|_{H^s_t}$, the expression must be invariant under the transformation \eqref{2.7}.

	\subsection{Momentum ray transforms and the Fourier transform}
	
	We use the Fourier transform $F:{\mathcal S}({\R}^n)\rightarrow{\mathcal S}({\R}^n),\ f\mapsto\wh f$ in the following form (hereafter $\textsl{i}$ is the imaginary unit and $y$ is the Fourier dual variable of $x$):
	$$
	\wh{f}(y)=\frac{1}{(2\pi)^{n/2}}\int\limits_{{\R}^n}
	e^{-\I\langle y,x\rangle} f(x)\, \md x.
	$$
	The Fourier transform $F:{\mathcal S}({\R}^n;S^m{\R}^n)\rightarrow{\mathcal S}({\R}^n;S^m{\R}^n),\ f\mapsto\wh f$ on symmetric tensor fields is defined componentwise, i.e., ${\wh f}_{i_1\dots i_m}=\wh{f_{i_1\dots i_m}}$ (we use Cartesian coordinates only). Introduce also the
	Fourier transform $F:\Sc(T\Sb^{n-1})\rightarrow\Sc(T\Sb^{n-1}),\ \varphi\mapsto\wh\varphi$ on $T\Sb^{n-1}$ by
	\begin{equation}
	\wh\varphi(y,\xi)= \frac{1}{(2\pi)^{(n-1)/2}}\int\limits_{\xi^{\perp}}
	e^{-\I\langle y,x\rangle} \varphi(x,\xi)\, \md x,
	\label{2.8}
	\end{equation}
	where $\md x$ is the $(n-1)$-dimensional Lebesgue measure on the hyperplane $\xi^{\perp}=\{x\in{\R}^n\mid\l\xi,x\r=0\}$. It is the standard Fourier transform in the $(n-1)$-dimensional variable $x$ while $\xi\in \Sb^{n-1}$ is considered as a parameter.
	
	The Fourier transform of the momentum ray transform is given by the formula
	\begin{equation}
	\widehat{I^k\!f}(y,\xi)=(2\pi)^{1/2}F_{x\rightarrow y}[\l\xi,x\r^k \l f(x),\xi^m\r]
	=(2\pi)^{1/2}{\I}^k\,\l d^k\wh{f}(y),\xi^{m+k}\r
	\label{2.9}
	\end{equation}
	for $(y,\xi)\in T{\S}^{n-1}$.
	Indeed,
	$$
	\begin{aligned}
	\widehat{I^k\!f}(y,\xi)&=(2\pi)^{(1-n)/2}\int\limits_{\xi^\bot}e^{-\textsl{i}\l y,x\r}(I^k\!f)(x,\xi)\,\md x\\
	&=(2\pi)^{(1-n)/2}\int\limits_{\xi^\bot}\int\limits_{-\infty}^\infty e^{-\textsl{i}\l y,x\r}t^k\l f(x+t\xi),\xi^m\r\,\md t\md x.
	\end{aligned}
	$$
	On assuming $(y,\xi)\in T{\S}^{n-1}$, change the integration variables as $z=x+t\xi$
	$$
	\widehat{I^k\!f}(y,\xi)
	=(2\pi)^{1/2}\big\langle(2\pi)^{-n/2}\int\limits_{{\R}^n} e^{-\textsl{i}\l y,z\r}\l\xi,z\r^k f(z)\,\md z,\xi^m\big\rangle.
	$$
	On using the equality
	$$
	(2\pi)^{-n/2}\int\limits_{{\R}^n} e^{-\textsl{i}\l y,z\r}\l\xi,z\r^k f(z)\,dz={\textsl{i}}^k\l\xi,\partial_y\rangle^k\hat f(y),
	$$
	we obtain
	$$
	\widehat{I^k\!f}(y,\xi)
	=(2\pi)^{1/2}{\textsl{i}}^k\l\xi,\partial_y\rangle^k\l\hat f(y),\xi^m\r.
	$$
	This coincides with \eqref{2.9}.
	
	\subsection{The Reshetnyak formula for scalar functions}
	
	Recall \cite{Sh3} that the Hilbert space $H^s_t({\R}^n)$ is defined for $s\in\R$ and $t>-n/2$ as the completion of ${\mathcal S}({\R}^n)$ with respect to  the norm
	\begin{equation}
	\|f\|^2_{H^s_t({\R}^n)}=\int\limits_{{\R}^n}|y|^{2t}(1+|y|^2)^{s-t}|\wh f(y)|^2\,\md y.
	\label{2.10}
	\end{equation}
	Similarly, the Hilbert space $H^s_t(T{\S}^{n-1})$ is defined for $s\in\R$ and $t>-(n-1)/2$ as the completion of ${\mathcal S}(T{\S}^{n-1})$ with respect to  the norm
	\begin{equation}
	\|\varphi\|^2_{H^s_t(T{\S}^{n-1})}=\frac{1}{2\pi}\int\limits_{{\S}^{n-1}}\int\limits_{\xi^\bot}|y|^{2t}(1+|y|^2)^{s-t}|\wh\varphi(y,\xi)|^2\,\md y\md \xi,
	\label{2.11}
	\end{equation}
	where $\md\xi$ is the volume form on the sphere ${\S}^{n-1}$ induced by the Euclidean metric of ${\R}^n$.
	
	The following statement is a particular case of $m=0$ in \cite[Theorem 4.2]{Sh3}. Given a function $f\in{\mathcal S}({\R}^n)$, the
	{\it Reshetnyak formula}
	\begin{equation}
	\|f\|^2_{H^s_t({\R}^n)}=a_n\|If\|^2_{H^{s+1/2}_{t+1/2}(T{\S}^{n-1})}
	\label{2.12}
	\end{equation}
	holds for every $s\in\R$ and every $t>-n/2$, where
	\begin{equation}
	a_n=\frac{\Gamma\big((n-1)/2\big)}{2\pi^{(n-1)/2}}.
	\label{2.13}
	\end{equation}
	(Unfortunately, there is a misprint in the formula for the coefficient $a_k=a_k(m,n)$ in Theorem 4.2 of \cite{Sh3}. The right formula is presented in \cite[Formula (2.15.3)]{mb}, the coefficients are independent of $(s,t)$.) Formula \eqref{2.12} will be used throughout the paper.
	
	\section{Inversion algorithm for momentum ray transforms}\label{Inversion}
	
	Let us recall the definition of the {\it partial symmetrization}
	$$
	\sigma(i_1\dots i_r)u_{i_1\dots i_rj_1\dots j_s}=\frac{1}{r!}\sum\limits_{\pi\in\Pi_r}u_{i_{\pi(1)}\dots i_{\pi(r)}j_1\dots j_s},
	$$
	where the summation is performed over the set $\Pi_r$ of all permutations of the set $\{1,\dots r\}$.
	
	The following statement is the main ingredient of our algorithm for recovering a rank $m$ symmetric tensor field $f$ from the data $(I^0\!f,\dots, I^m\!f)$.
	
	\begin{theorem} \label{Th3.1}
		Given a tensor field $f=(f_{i_1\dots i_m})\in{\mathcal S}({\R}^n;S^m{\R}^n)$, equalities
		\begin{equation}
		Jf_{i_1\dots i_m}=\frac{1}{m!}\sigma(i_1\dots i_m)\sum\limits_{k=0}^m(-1)^k{m\choose k}\frac{\partial^m(J^k\!f)}
		{\partial x^{i_1}\dots\partial x^{i_k}\partial\xi^{i_{k+1}}\dots\partial\xi^{i_m}}
		\label{3.1}
		\end{equation}
		hold for all indices $(i_1,\dots,i_m)$,
		where the left-hand side is the  result of applying the ray transform $J=J^0$ to the coordinate $f_{i_1\dots i_m}$ considered as a scalar function on ${\R}^n$ (we use Cartesian coordinates only).
	\end{theorem}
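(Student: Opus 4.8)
The plan is to expand the mixed partial derivative in \eqref{3.1} completely, to organize the resulting terms by a small combinatorial bookkeeping, and then to check that the alternating sum over $k$ together with the symmetrization $\sigma(i_1\dots i_m)$ annihilates everything except one term, which is precisely $Jf_{i_1\dots i_m}$. The starting point is to record how the two families of first-order operators act under the integral sign in \eqref{2.2}; differentiating under the integral is legitimate because $f\in{\mathcal S}({\mathbb R}^n;S^m{\mathbb R}^n)$ and $\xi\neq0$, so the integrand together with all of its $x$- and $\xi$-derivatives decays rapidly in $t$. For a symmetric tensor field $g$ of rank $p$ write $\iota_ig$ for the rank $p-1$ field with components $(\iota_ig)_{j_2\dots j_p}=g_{ij_2\dots j_p}$. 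Then a direct chain-rule computation gives $\partial_{x^i}(J^\ell g)=J^\ell(\partial_ig)$ and $\partial_{\xi^i}(J^\ell g)=J^{\ell+1}(\partial_ig)+p\,J^\ell(\iota_ig)$, the term $J^{\ell+1}(\partial_ig)$ coming from differentiating the argument $x+t\xi$ (whence the extra power of $t$) and the term $p\,J^\ell(\iota_ig)$ from differentiating the factor $\xi^{i_1}\cdots\xi^{i_p}$ and using the symmetry of $g$. (In particular, with $g=f$ and $p=m$ this reads $J^k(\iota_if)=\frac1m\big(\partial_{\xi^i}(J^kf)-\partial_{x^i}(J^{k+1}f)\big)$, which already reflects the shape of \eqref{3.1}.)

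Iterating these two rules, the operator $\partial_{x^{i_1}}\cdots\partial_{x^{i_k}}\partial_{\xi^{i_{k+1}}}\cdots\partial_{\xi^{i_m}}$ applied to $J^kf$ expands into a sum over subsets $B\subseteq\{k+1,\dots,m\}$ recording which of the $m-k$ $\xi$-derivatives ``open a slot'' of $f$ rather than producing a derivative on $f$. The summand indexed by $B$, with $r=|B|$, carries inside the integral the weight $t^{m-r}$ (each non-opening operator contributes one power of $t$ on top of the initial $t^k$), the numerical factor $m!/(m-r)!$ (each opening lowers the current rank by exactly one, so this product of rank factors is independent of the order of differentiation), the derivatives $\prod_{q\in A}\partial_q$ acting on $f$ with $A=\{i_1,\dots,i_k\}\cup\{i_a:a\in\{k+1,\dots,m\}\setminus B\}$, and the $r$ indices $\{i_a:a\in B\}$ inserted into $r$ slots of $f$, the remaining $m-r$ slots staying contracted with $\xi$. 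The only summand equal to $\int f_{i_1\dots i_m}(x+t\xi)\,\md t=Jf_{i_1\dots i_m}$ — no surviving $\xi$-contraction, no derivative on $f$, weight $t^0$ — needs all $m$ slots of $f$ opened, which forces $k=0$ and $B=\{1,\dots,m\}$, and there it occurs with numerical factor $m!$.

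It remains to show that after applying $\frac1{m!}\sigma(i_1\dots i_m)$ all other summands cancel across $\sum_k(-1)^k\binom{m}{k}$. Symmetrizing makes each summand depend only on the unordered partition of $\{i_1,\dots,i_m\}$ into ``derivative indices'' $A$ and ``slot indices'' $B$; a routine count shows that, for fixed $r=|B|$, such a partition is produced by the $k$-th term with a multiplicity that, as a function of $k$, is proportional to $\binom{m-k}{r}$ (and vanishes for $k>m-r$), so the total coefficient of the corresponding expression is a nonzero constant times $\sum_{k=0}^{m-r}(-1)^k\binom{m}{k}\binom{m-k}{r}$. Since $\binom{m}{k}\binom{m-k}{r}=\binom{m}{r}\binom{m-r}{k}$, the binomial theorem gives $\sum_{k=0}^{m-r}(-1)^k\binom{m}{k}\binom{m-k}{r}=\binom{m}{r}(1-1)^{m-r}$, which vanishes unless $r=m$. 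Hence only the $r=m$ term survives; since the factor $m!$ is cancelled by $\frac1{m!}$ and $\sigma(i_1\dots i_m)$ acts trivially on it ($Jf_{i_1\dots i_m}$ is already symmetric in $i_1,\dots,i_m$), the right-hand side of \eqref{3.1} equals $Jf_{i_1\dots i_m}$.

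The one genuinely delicate point is the bookkeeping in the last two steps: obtaining the factor $m!/(m-r)!$ from iterated slot-opening and justifying its independence of the order of differentiation, and correctly counting the multiplicity with which a given partition $(A,B)$ arises from the symmetrized $k$-th term so that the cancellation collapses to the clean identity above. An alternative that lightens this combinatorics is induction on $m$: apply the rank $m-1$ case of \eqref{3.1} to $\iota_{i_m}f$, substitute $J^k(\iota_{i_m}f)=\frac1m\big(\partial_{\xi^{i_m}}(J^kf)-\partial_{x^{i_m}}(J^{k+1}f)\big)$, average over the choice of distinguished index, and merge the two resulting families of terms using Pascal's rule $\binom{m-1}{k}+\binom{m-1}{k-1}=\binom{m}{k}$.
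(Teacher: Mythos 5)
Your argument is correct, and your primary route is genuinely different from the paper's. The paper proves \eqref{3.1} by induction on $m$: fixing the last index, it sets ${\tilde f}_{i_1\dots i_m}=f_{i_1\dots i_m i_{m+1}}$, derives $J^k\tilde f=\frac{1}{m+1}\big(\partial_{\xi^{i_{m+1}}}(J^k\!f)-\partial_{x^{i_{m+1}}}(J^{k+1}\!f)\big)$ --- exactly your identity $J^k(\iota_if)=\frac1m\big(\partial_{\xi^i}(J^kf)-\partial_{x^i}(J^{k+1}f)\big)$ --- substitutes it into the induction hypothesis, upgrades $\sigma(i_1\dots i_m)$ to $\sigma(i_1\dots i_{m+1})$ using the symmetry of the left-hand side, shifts the summation index and merges the two sums with Pascal's rule; this is precisely the ``alternative'' you sketch in your final sentence. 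Your main proof instead expands $\partial_{x^{i_1}}\cdots\partial_{x^{i_k}}\partial_{\xi^{i_{k+1}}}\cdots\partial_{\xi^{i_m}}(J^k\!f)$ in one pass, indexes the terms by the set $B$ of slot-opening $\xi$-derivatives, and annihilates every contribution with $r=|B|<m$ through $\binom{m}{k}\binom{m-k}{r}=\binom{m}{r}\binom{m-r}{k}$ and $(1-1)^{m-r}=0$. The key ingredients check out: the two differentiation rules, the weight $t^{m-r}$ (only the non-opening $\xi$-derivatives add powers of $t$, which together with the initial $t^k$ gives $m-r$ --- your parenthetical is slightly loose on this, since the $x$-derivatives add none), the order-independent factor $m!/(m-r)!$ (the rank available at each opening is $m$ minus the number of previous openings, and openings are the only rank-changing steps), the fact that after $\sigma(i_1\dots i_m)$ a summand depends only on $r$ because $f$ is symmetric and partials commute, and the multiplicity $\binom{m-k}{r}$. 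The trade-off: the paper's induction is shorter and hides the combinatorics in Pascal's rule, while your expansion requires the bookkeeping you rightly flag as the delicate point but is more transparent about where each term goes and actually proves more --- the vanishing of the symmetrized combinations for every $r<m$, with \eqref{3.1} as the surviving $r=m$ case.
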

	
	\begin{proof}
		Equality \eqref{3.1} trivially holds for $m=0$. We proceed by induction on $m$. Assume \eqref{3.1} to be valid for tensor fields
		$f\in{\mathcal S}({\R}^n;S^m{\R}^n)$ with some $m$.
		
		Now assume $f\in{\mathcal S}({\R}^n;S^{m+1}{\R}^n)$. Definition \eqref{2.2} can be written as
		$$
		(J^k\!f)(x,\xi)=\xi^{p_1}\dots\xi^{p_{m+1}}\int\limits_{-\infty}^\infty t^k\, f_{p_1\dots p_{m+1}}(x+t\xi)\,\md t.
		$$
		Differentiate this equality with respect to $\xi^{i_{m+1}}$
		\begin{equation}
		\begin{aligned}
		\frac{\partial(J^k\!f)}{\partial\xi^{i_{m+1}}}&=(m+1)\xi^{p_1}\dots\xi^{p_m}\int\limits_{-\infty}^\infty t^k\, f_{p_1\dots p_m i_{m+1}}(x+t\xi)\,\md t\\
		&+\frac{\partial}{\partial x^{i_{m+1}}}\Big(\xi^{p_1}\dots\xi^{p_{m+1}}\int\limits_{-\infty}^\infty t^{k+1}\, f_{p_1\dots p_{m+1}}(x+t\xi)\,\md t\Big).
		\end{aligned}
		\label{3.2}
		\end{equation}
		
		Let us fix a value of the index $i_{m+1}$ and define the tensor field $\tilde f\in{\mathcal S}({\R}^n;S^m{\R}^n)$ by
		$$
		{\tilde f}_{i_1\dots i_m}=f_{i_1\dots i_m i_{m+1}}.
		$$
		Then \eqref{3.2} can be written as
		$$
		\frac{\partial(J^k\!f)}{\partial\xi^{i_{m+1}}}=(m+1)J^k\!\tilde f+\frac{\partial(J^{k+1}\!f)}{\partial x^{i_{m+1}}}.
		$$
		From this,
		$$
		J^k\!\tilde f=\frac{1}{m+1}\Big(\frac{\partial(J^k\!f)}{\partial\xi^{i_{m+1}}}-\frac{\partial(J^{k+1}\!f)}{\partial x^{i_{m+1}}}\Big).
		$$
		Differentiate this equality to obtain
		\begin{equation}
		\begin{aligned}
		&\frac{\partial^m(J^k\!\tilde f)}{\partial x^{i_1}\dots\partial x^{i_k}\partial\xi^{i_{k+1}}\dots\partial\xi^{i_m}}=\\
		&=\frac{1}{m+1}\Big(\frac{\partial^{m+1}(J^k\!f)}{\partial x^{i_1}\dots\partial x^{i_k}\partial\xi^{i_{k+1}}\dots\partial\xi^{i_{m+1}}}
		-\frac{\partial^{m+1}(J^{k+1}\!f)}{\partial x^{i_1}\dots\partial x^{i_k}\partial x^{i_{m+1}}\partial \xi^{i_{k+1}}\dots\partial x^{i_m}}\Big).
		\end{aligned}
		\label{3.3}
		\end{equation}
		
		By the induction hypothesis,
		$$
		Jf_{i_1\dots i_{m+1}}=J{\tilde f}_{i_1\dots i_m}=\frac{1}{m!}\sigma(i_1\dots i_m)\sum\limits_{k=0}^m(-1)^k{m\choose k}\frac{\partial^m(J^k\!{\tilde f})}
		{\partial x^{i_1}\dots\partial x^{i_k}\partial\xi^{i_{k+1}}\dots\partial\xi^{i_m}}.
		$$
		Substitute value \eqref{3.3} into the last formula
		$$
		\begin{aligned}
		Jf_{i_1\dots i_{m+1}}&=\frac{1}{(m+1)!}\sigma(i_1\dots i_{m+1})\sum\limits_{k=0}^m(-1)^k{m\choose k}
		\frac{\partial^{m+1}(J^k\!f)}{\partial x^{i_1}\dots\partial x^{i_k}\partial\xi^{i_{k+1}}\dots\partial\xi^{i_{m+1}}}\\
		&-\frac{1}{(m+1)!}\sigma(i_1\dots i_{m+1})\sum\limits_{k=0}^m(-1)^k{m\choose k}
		\frac{\partial^{m+1}(J^{k+1}\!f)}{\partial x^{i_1}\dots\partial x^{i_k}\partial x^{i_{m+1}}\partial \xi^{i_{k+1}}\dots\partial x^{i_m}}.
		\end{aligned}
		$$
		We have replaced the symmetrization $\sigma(i_1\dots i_m)$ by the stronger operator $\sigma(i_1\dots i_{m+1})$ because the left-hand side is symmetric in the indices $(i_1,\dots, i_{m+1})$. In the second sum on the right-hand side, we change the summation index as $k=k'-1$. After the change, we again use the notation $k$ instead of $k'$. In such the way, we transform the formula to the form
		$$
		\begin{aligned}
		Jf_{i_1\dots i_{m+1}}&=\frac{1}{(m+1)!}\sigma(i_1\dots i_{m+1})\sum\limits_{k=0}^m(-1)^k{m\choose k}
		\frac{\partial^{m+1}(J^k\!f)}{\partial x^{i_1}\dots\partial x^{i_k}\partial\xi^{i_{k+1}}\dots\partial\xi^{i_{m+1}}}\\
		&+\frac{1}{(m+1)!}\sigma(i_1\dots i_{m+1})\sum\limits_{k=1}^{m+1}(-1)^k{m\choose {k-1}}
		\frac{\partial^{m+1}(J^k\!f)}{\partial x^{i_1}\dots\partial x^{i_{k-1}}\partial x^{i_{m+1}}\partial \xi^{i_k}\dots\partial x^{i_m}}.
		\end{aligned}
		$$
		We assume binomial coefficients ${m\choose k}=\frac{m!}{k!(m-k)!}$ to be defined for all integers $m$ and $k$ under the agreement: ${m\choose k}=0$ if either $m<0$ or $k<0$ or $k>m$. Therefore both summations can be extended to the limits $0\le k\le m+1$. Besides this, we can write indices $(i_1,\dots, i_{m+1})$ in an arbitrary order on the right-hand side because of the presence of the symmetrization $\sigma(i_1\dots i_{m+1})$.
		With the help of the Pascal relation
		$ {m\choose k}+{m\choose{k-1}}={{m+1}\choose k}$, the formula takes the form
		$$
		Jf_{i_1\dots i_{m+1}}=\frac{1}{(m+1)!}\sigma(i_1\dots i_{m+1})\sum\limits_{k=0}^{m+1}(-1)^k{{m+1}\choose k}
		\frac{\partial^{m+1}(J^k\!f)}{\partial x^{i_1}\dots\partial x^{i_k}\partial\xi^{i_{k+1}}\dots\partial\xi^{i_{m+1}}}.
		$$
		This finishes the induction step.
	\end{proof}
	
	Let us recall the inversion formula for recovering a scalar function $f\in{\mathcal S}({\R}^n)$ from $If$:
	\begin{equation}
	f(x)=\frac{\Gamma\big((n-1)/2\big)}{4\pi^{(n+1)/2}}(-\Delta)^{1/2}\int\limits_{{\S}^{n-1}}(If)(x-\l\xi,x\r\xi,\xi)\,\md\xi.
	\label{3.4}
	\end{equation}
	This is a particular case of $m=0$ of \cite[Theorem 2.12.2]{mb}.
	
	Our algorithm for recovering a symmetric $m$-tensor field $f$ from the collection of functions $(I^0\!f,\dots, I^m\!f)$ is as follows. Given $(I^0\!f,\dots, I^m\!f)$,
	we find the data $(J^0\!f,\dots, J^m\!f)$ by formula \eqref{2.6}. Then we find the functions $Jf_{i_1\dots i_m}$ for all values of indices by \eqref{3.1} and find $If_{i_1\dots i_m}=Jf_{i_1\dots i_m}|_{T{\S}^{n-1}}$. From the latter data we recover all components $f_{i_1\dots i_m}$ of the field $f$ by formula \eqref{3.4}.
	
	The stability of the recovery procedure for a scalar function is completely described by the Reshetnyak formula \eqref{2.12}. For higher rank tensor fields, the stability question is more delicate because of the presence of $m^{\mathrm{th}}$ order derivatives in formula \eqref{3.1}. We will investigate the stability question in the next section.

	\section{Reshetnyak formula for momentum ray transforms}\label{Reshetnyak}
	
	In this section, we derive the Reshetnyak formula and stability estimate for $m=1$ and for $m=2$.
	
	\subsection{Operators $X_i$ and $\Xi_i$}
	
	In the cases of $m=1$ and of $m=2$, formula \eqref{3.1} takes the forms
	\begin{equation}
	Jf_i=\frac{\partial(J^0\!f)}{\partial\xi^i}-\frac{\partial(J^1\!f)}{\partial x^i}
	\label{4.1}
	\end{equation}
	and
	\begin{equation}
	Jf_{ij}=\frac{1}{2}\Big(\frac{\partial^2(J^0\!f)}{\partial \xi^i\partial \xi^j}
	-\frac{\partial^2(J^1\!f)}{\partial x^i\partial \xi^j}
	-\frac{\partial^2(J^1\!f)}{\partial x^j\partial \xi^j}
	+\frac{\partial^2(J^2\!f)}{\partial x^i\partial x^j}\Big)
	\label{4.2}
	\end{equation}
	respectively. We are going to rewrite these formulas in intrinsic terms of the manifold $T{\S}^{n-1}$.
	
	Introduce the vector fields on ${\R}^n\times({\R}^n\setminus\{0\})=\{(x,\xi)\in {\R}^n\times{\R}^n\mid\xi\neq0\}$
	\begin{equation}
	\begin{aligned}
	{\tilde X}_i&=\frac{\partial}{\partial x^i}-\xi_i\xi^p\frac{\partial}{\partial x^p}\quad(1\le i\le n),\\
	{\tilde \Xi}_i&=\frac{\partial}{\partial \xi^i}-x_i\xi^p\frac{\partial}{\partial x^p}-\xi_i\xi^p\frac{\partial}{\partial \xi^p}\quad(1\le i\le n).
	\end{aligned}
	\label{4.3}
	\end{equation}
	The notations ${\tilde X}_i$ and ${\tilde \Xi}_i$ are chosen because the derivatives $\frac{\partial}{\partial x^i}$ and $\frac{\partial}{\partial \xi^i}$ are in some sense leading terms on the right-hand sides of \eqref{4.3}.
	
	\begin{lemma} \label{L8.1}
		At every point $(x,\xi)\in T{\S}^{n-1}$, vectors ${\tilde X}_i(x,\xi)$ and ${\tilde \Xi}_i(x,\xi)\ (1\le i\le n)$ are tangent to $T{\S}^{n-1}$. Let $X_i$ and $\Xi_i$ be the restrictions of vector fields ${\tilde X}_i$ and ${\tilde \Xi}_i$ to the manifold $T{\S}^{n-1}$ respectively. Thus, $X_i$ and $\Xi_i$ are smooth vector fields on $T{\S}^{n-1}$ and can be considered as first order differential operators
		$$
		X_i,\Xi_i:C^\infty(T{\S}^{n-1})\rightarrow C^\infty(T{\S}^{n-1}).
		$$
		The operators satisfy
		\begin{equation}
		[X_i,X_j]=0,
		\label{4.4}
		\end{equation}
		\begin{equation}
		[\Xi_i,\Xi_j]=x_iX_j-x_jX_i+\xi_i\Xi_j-\xi_j\Xi_i,
		\label{4.5}
		\end{equation}
		\begin{equation}
		[X_i,\Xi_j]=\xi_iX_j.
		\label{4.6}
		\end{equation}
		At every point $(x,\xi)\in T{\S}^{n-1}$, vectors $X_i(x,\xi),\Xi_i(x,\xi)\ (1\le i\le n)$ generate the tangent space $T_{(x,\xi)}(T{\S}^{n-1})$ and satisfy
		\begin{equation}
		\xi^iX_i(x,\xi)=0,\quad \xi^i\Xi_i(x,\xi)=0.
		\label{4.7}
		\end{equation}
	\end{lemma}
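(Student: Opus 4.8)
The plan is to verify everything by direct computation in the ambient coordinates $(x,\xi)$ on $\R^n\times(\R^n\setminus\{0\})$, using the defining equations $\l x,\xi\r=0$ and $|\xi|^2=1$ of $T\S^{n-1}$. First I would check tangency: the submanifold $T\S^{n-1}$ is cut out by the two functions $g_1(x,\xi)=\l x,\xi\r$ and $g_2(x,\xi)=|\xi|^2-1$, so a vector $v$ at a point of $T\S^{n-1}$ is tangent iff $v g_1 = v g_2 = 0$ there. Applying $\tilde X_i$ and $\tilde\Xi_i$ to $g_1$ and $g_2$ and using $\l x,\xi\r=0$, $|\xi|^2=1$ at the point, one gets $0$ in all four cases; e.g. $\tilde X_i g_2 = 2\xi_i - 2\xi_i|\xi|^2 = 0$ and $\tilde X_i g_1 = x_i - \xi_i\l x,\xi\r = x_i - \xi_i|\xi|^2 \cdot 0$... more carefully $\tilde X_i\l x,\xi\r = \xi_i - \xi_i\l\xi,\xi\r = \xi_i-\xi_i|\xi|^2=0$, and similarly $\tilde\Xi_i g_1 = x_i - x_i|\xi|^2 - \xi_i\l x,\xi\r = 0$, $\tilde\Xi_i g_2 = 2\xi_i - 2\xi_i|\xi|^2 = 0$. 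This establishes that $X_i,\Xi_i$ are well-defined vector fields on $T\S^{n-1}$.

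Next I would prove the commutator relations \eqref{4.4}--\eqref{4.6}. The cleanest route is to compute the brackets of the ambient fields $\tilde X_i,\tilde\Xi_i$ on all of $\R^n\times(\R^n\setminus\{0\})$ and then restrict; since $X_i,\Xi_i$ are the restrictions of $\tilde X_i,\tilde\Xi_i$ and these are tangent to $T\S^{n-1}$, the bracket of the restrictions is the restriction of the bracket. Each $\tilde X_i,\tilde\Xi_i$ is of the form $\partial/\partial(\cdot) + (\text{coefficients})\,\partial/\partial(\cdot)$, so the Lie brackets are routine (if somewhat lengthy) applications of $[fU,gV]=fg[U,V]+f(Ug)V-g(Vf)U$. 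Carrying this out, $[\tilde X_i,\tilde X_j]$ should vanish identically; $[\tilde X_i,\tilde\Xi_j]$ should equal $\xi_i\tilde X_j$ plus terms proportional to $\l x,\xi\r$ or $(|\xi|^2-1)$ that vanish on $T\S^{n-1}$; and $[\tilde\Xi_i,\tilde\Xi_j]$ should equal $x_i\tilde X_j - x_j\tilde X_i + \xi_i\tilde\Xi_j - \xi_j\tilde\Xi_i$ up to such terms. Restricting to $T\S^{n-1}$ then gives \eqref{4.4}--\eqref{4.6} exactly.

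Finally, for the spanning statement and \eqref{4.7}: the relations $\xi^i X_i = 0$ and $\xi^i\Xi_i=0$ follow by direct substitution — $\xi^i\tilde X_i = \xi^p\partial/\partial x^p - |\xi|^2\xi^p\partial/\partial x^p = (1-|\xi|^2)\xi^p\partial/\partial x^p$, which vanishes on $T\S^{n-1}$, and similarly $\xi^i\tilde\Xi_i = \xi^p\partial/\partial\xi^p - \l x,\xi\r\xi^p\partial/\partial x^p - |\xi|^2\xi^p\partial/\partial\xi^p = (1-|\xi|^2)\xi^p\partial/\partial\xi^p - \l x,\xi\r\xi^p\partial/\partial x^p$, again zero on $T\S^{n-1}$. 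For spanning, note $\dim T\S^{n-1} = 2(n-1)$, while the $2n$ vectors $X_i,\Xi_i$ satisfy the two independent relations in \eqref{4.7}; so it suffices to check that these are the only relations, i.e. that the $X_i$ span the $(n-1)$-dimensional "horizontal" subspace $\{v\in\xi^\perp\}$ inside $T_x\S^{n-1}$-directions and the $\Xi_i$ span a complementary $(n-1)$-dimensional subspace. Concretely, at $(x,\xi)$ the leading term of $X_i$ is the projection of $\partial/\partial x^i$ onto $\xi^\perp$, and these projections for $i=1,\dots,n$ span the full $(n-1)$-dimensional hyperplane $\xi^\perp$; likewise the $\xi$-components of $\Xi_i$ are the projections of $\partial/\partial\xi^i$ onto $\xi^\perp$, spanning the remaining $(n-1)$ directions, and no nontrivial combination of the $\Xi_i$ can be purely horizontal except via the relation $\xi^i\Xi_i=0$. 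Counting dimensions then forces the $X_i,\Xi_i$ to span $T_{(x,\xi)}(T\S^{n-1})$.

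The main obstacle is purely bookkeeping: the Lie-bracket computation for $[\tilde\Xi_i,\tilde\Xi_j]$ involves differentiating the $x$- and $\xi$-dependent coefficients $x_i\xi^p$ and $\xi_i\xi^p$ and collecting many terms, and one must be careful to track which leftover terms carry a factor of $\l x,\xi\r$ or $|\xi|^2-1$ (hence die on restriction) versus which survive to give the stated right-hand side. No conceptual difficulty arises beyond this careful accounting.
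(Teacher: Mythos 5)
Your plan is correct and follows essentially the same route as the paper: checking that $\tilde X_i,\tilde\Xi_i$ annihilate the defining functions $\langle x,\xi\rangle$ and $|\xi|^2-1$ of $T{\S}^{n-1}$, computing the ambient Lie brackets via $[fU,gV]=fg[U,V]+f(Ug)V-g(Vf)U$ and restricting, verifying \eqref{4.7} by direct substitution, and showing that the only linear relations among the $X_i,\Xi_i$ are those in \eqref{4.7}, whence spanning follows by a dimension count. One harmless slip: $\tilde X_i$ contains no $\xi$-derivatives, so $\tilde X_i(|\xi|^2-1)$ is identically zero (the value $2\xi_i(1-|\xi|^2)$ you wrote is that of $\tilde\Xi_i(|\xi|^2-1)$), and in fact the identity $[\tilde X_i,\tilde\Xi_j]=\xi_i\tilde X_j$ holds exactly in the ambient space, not only modulo terms vanishing on $T{\S}^{n-1}$.
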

	
	\begin{proof}
		Definition \eqref{4.3} implies
		$$
		{\tilde X}_i|\xi|^2=0,\quad{\tilde \Xi}_i|\xi|^2=2\xi_i(1-|\xi|^2),\quad
		{\tilde X}_i\l\xi,x\r=\xi_i(1-|\xi|^2),\quad {\tilde \Xi}_i\l\xi,x\r=x_i(1-|\xi|^2)-\xi_i\l\xi,x\r.
		$$
		Right-hand sides of these equalities vanish on $T{\S}^{n-1}$. This proves the first statement.
		
		From definition \eqref{4.3},
		$$
		\xi^i{\tilde X}_i=(1-|\xi|^2)\xi^p\frac{\partial}{\partial x^p},\quad
		\xi^i{\tilde \Xi}_i=\l\xi,x\r\xi^p\frac{\partial}{\partial x^p}-(1-|\xi|^2)\xi^p\frac{\partial}{\partial \xi^p}.
		$$
		Right-hand sides of these equalities vanish on $T{\S}^{n-1}$. This proves \eqref{4.7}.
		
		Recall the well known formula:
		$$
		[fX,gY]=fg[X,Y]+f(Xg)Y-g(Yf)X
		$$
		for vector fields $X,Y$ and for functions $f,g$. On using this formula, one easily derives from definition \eqref{4.3}
		\begin{equation}
		[{\tilde X}_i,{\tilde \Xi}_j]=\xi_i\frac{\partial}{\partial x^j}-\xi_i\xi_j\xi^p\frac{\partial}{\partial x^p}.
		\label{4.8}
		\end{equation}
		On the other hand,
		\begin{equation}
		\xi_i{\tilde X}_j=\xi_i\Big(\frac{\partial}{\partial x^j}-\xi_i\xi^p\frac{\partial}{\partial x^p}\Big)
		=\xi_i\frac{\partial}{\partial x^j}-\xi_i\xi_j\xi^p\frac{\partial}{\partial x^p}.
		\label{4.9}
		\end{equation}
		Comparing \eqref{4.8} and \eqref{4.9}, we see that $[{\tilde X}_i,{\tilde \Xi}_j]=\xi_i{\tilde X}_j$. This proves \eqref{4.6}. Formulas \eqref{4.4}--\eqref{4.5} are proved in a similar way.
		
		Finally we prove that, at a point $(x,\xi)\in T{\S}^{n-1}$, the vectors $X_i(x,\xi),\Xi_i(x,\xi)\ (1\le i\le n)$ generate  the tangent space $T_{(x,\xi)}(T{\S}^{n-1})$. To this end we have to demonstrate that any linear dependence between these vectors is actually a corollary of \eqref{4.7}.
		Assume that
		\begin{equation}
		\alpha^i X_i+\beta^i \Xi_i=0
		\label{4.10}
		\end{equation}
		with some coefficients $\alpha^i,\beta^i$. Substitute values \eqref{4.3} into this equality
		$$
		\alpha^i\Big(\frac{\partial}{\partial x^i}-\xi_i\xi^p\frac{\partial}{\partial x^p}\Big)
		+\beta^i\Big(\frac{\partial}{\partial \xi^i}-x_i\xi^p\frac{\partial}{\partial x^p}-\xi_i\xi^p\frac{\partial}{\partial \xi^p}\Big)=0.
		$$
		This can be written in the form
		$$
		\big(\alpha^i(\delta_i^p-\xi_i\xi^p)-\beta^ix_i\xi^p)\frac{\partial}{\partial x^p}
		+\beta^i(\delta_i^p-\xi_i\xi^p)\frac{\partial}{\partial \xi^p}=0,
		$$
		where $\delta^i_j$ is the Kronecker tensor.
		Since vectors $\frac{\partial}{\partial x^p},\frac{\partial}{\partial \xi^p}\ (p=1,\dots,n)$ are linearly independent, the last equation implies
		\begin{equation}
		(\delta_i^p-\xi_i\xi^p)\alpha^i-x_i\xi^p\beta^i=0\quad(p=1,\dots,n),
		\label{4.11}
		\end{equation}
		\begin{equation}
		(\delta_i^p-\xi_i\xi^p)\beta^i=0\quad(p=1,\dots,n).
		\label{4.12}
		\end{equation}
		
		At a point $(x,\xi)\in T{\S}^{n-1}$, the rank of the matrix $(\delta_i^p-\xi_i\xi^p)$ of system \eqref{4.12} is equal to $n-1$ and any solution to the system is of the form $\beta^i=\beta_0\xi^i$. System \eqref{4.11} takes now the form
		$$
		(\delta_i^p-\xi_i\xi^p)\alpha^i=0\quad(p=1,\dots,n).
		$$
		As before, this implies $\alpha^i=\alpha_0\xi^i$. Thus, equation \eqref{4.10} is actually of the form
		$$
		\alpha_0\xi^iX_i+\beta_0\xi^i\Xi_i=0.
		$$
		This means that any linear dependence between vectors $X_i(x,\xi),\Xi_i(x,\xi)\ (1\le i\le n)$ is actually a corollary of \eqref{4.7}.
	\end{proof}
	
	We can now present the intrinsic forms of equations \eqref{4.1} and \eqref{4.2}.
	
	\begin{theorem} \label{Th8.1}
		For a vector field $f=(f_i)\in{\mathcal S}({\R}^n;{\C}^n)$, the equality
		\begin{equation}
		If_i=(\Xi_i+\xi_i)(I^0\!f)-X_i(I^1\!f)
		\label{4.13}
		\end{equation}
		holds on $T{\S}^{n-1}$ for every $i$, where the left-hand side is the result of applying the ray transform $I=I^0$ to the coordinates $f_i$ considered as scalar functions on ${\R}^n$.
	\end{theorem}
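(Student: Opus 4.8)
The plan is to take the $m=1$ case of Theorem~\ref{Th3.1}, namely the identity \eqref{4.1}
\[
Jf_i=\frac{\partial(J^0\!f)}{\partial\xi^i}-\frac{\partial(J^1\!f)}{\partial x^i},
\]
which holds on all of ${\R}^n\times({\R}^n\setminus\{0\})$, rewrite the Cartesian derivatives in terms of the fields ${\tilde X}_i,{\tilde\Xi}_i$, and then restrict to $T{\S}^{n-1}$. By the definition \eqref{4.3} one has, identically,
\[
\frac{\partial}{\partial x^i}={\tilde X}_i+\xi_i\,\xi^p\frac{\partial}{\partial x^p},\qquad
\frac{\partial}{\partial\xi^i}={\tilde\Xi}_i+x_i\,\xi^p\frac{\partial}{\partial x^p}+\xi_i\,\xi^p\frac{\partial}{\partial\xi^p},
\]
so the whole matter reduces to evaluating the correction terms $\xi^p\,\partial(J^k\!f)/\partial x^p$ and $\xi^p\,\partial(J^k\!f)/\partial\xi^p$ for $k=0,1$.

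These correction terms are expressible through lower-order momenta. Differentiating the translation law \eqref{2.4} in $t$ at $t=0$ (equivalently, integrating by parts in \eqref{2.2}) gives
\[
\xi^p\frac{\partial(J^k\!f)}{\partial x^p}=-k\,(J^{k-1}\!f),
\]
while differentiating the homogeneity relation \eqref{2.3} in $t$ at $t=1$ (Euler's identity, with $m=1$) gives $\xi^p\,\partial(J^k\!f)/\partial\xi^p=-k\,(J^{k}\!f)$. Hence for $k=0$ both corrections vanish, so that $\partial(J^0\!f)/\partial\xi^i={\tilde\Xi}_i(J^0\!f)$, whereas for $k=1$ the first correction equals $-J^0\!f$, so $\partial(J^1\!f)/\partial x^i={\tilde X}_i(J^1\!f)-\xi_i\,(J^0\!f)$. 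Substituting into \eqref{4.1} yields, still on ${\R}^n\times({\R}^n\setminus\{0\})$,
\[
Jf_i=({\tilde\Xi}_i+\xi_i)(J^0\!f)-{\tilde X}_i(J^1\!f).
\]

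Finally I restrict this identity to $T{\S}^{n-1}$. By Lemma~\ref{L8.1} the fields ${\tilde X}_i,{\tilde\Xi}_i$ are tangent to $T{\S}^{n-1}$ and restrict there to $X_i,\Xi_i$, so the value of ${\tilde\Xi}_i(J^0\!f)$ at a point of $T{\S}^{n-1}$ depends only on $J^0\!f|_{T{\S}^{n-1}}=I^0\!f$, and likewise ${\tilde X}_i(J^1\!f)|_{T{\S}^{n-1}}=X_i(I^1\!f)$, by \eqref{2.5}; together with $Jf_i|_{T{\S}^{n-1}}=If_i$ this is exactly \eqref{4.13}. I do not expect a genuine obstacle: once the two lower-order identities above are recorded, the $m=1$ case is a mechanical assembly of \eqref{4.1}, \eqref{4.3}, those identities, and Lemma~\ref{L8.1}. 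The one point worth care is to apply the lower-order identities on ${\R}^n\times({\R}^n\setminus\{0\})$ \emph{before} restricting, so that no tangency subtlety arises; the analogous treatment of the $m=2$ formula \eqref{4.2} follows the same scheme but is computationally much heavier because of the mixed second derivatives.
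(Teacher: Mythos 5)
Your proposal is correct and follows essentially the same route as the paper: the $m=1$ case \eqref{4.1} of Theorem \ref{Th3.1}, the decomposition of $\partial/\partial x^i$ and $\partial/\partial\xi^i$ through ${\tilde X}_i,{\tilde\Xi}_i$ from \eqref{4.3}, the Euler and translation identities (the paper's \eqref{4.16}--\eqref{4.17} and \eqref{4.19}--\eqref{4.20}, which you state for general $k$), and the tangency statement of Lemma \ref{L8.1}. The only cosmetic difference is that you assemble the identity globally on ${\R}^n\times({\R}^n\setminus\{0\})$ and restrict to $T{\S}^{n-1}$ at the very end, whereas the paper restricts first and works with $X_i(I^k\!f)={\tilde X}_i(J^k\!f)$, $\Xi_i(I^k\!f)={\tilde\Xi}_i(J^k\!f)$ on $T{\S}^{n-1}$; the computations are identical.
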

	
	\begin{theorem} \label{Th8.2}
		For a tensor field $f=(f_{ij})\in \Sc(\Rb^{n};S^{2}\Rb^{n})$, the equality
		\begin{equation}
		\begin{aligned}
		If_{ij}=\frac{1}{2}\sigma(ij)\Big[& X_i X_j(I^2\!f)
		-2\Xi_i X_j(I^1\!f)
		-4\xi_i X_j(I^1\!f)\\
		&+\Xi_i \Xi_j(I^0\!f)
		+x_i X_j(I^0\!f)
		+3\xi_i \Xi_j(I^0\!f)
		+3\delta_{ij}(I^0\!f)-\xi_i\xi_j(I^0\!f)\Big]
		\end{aligned}
		\label{4.14}
		\end{equation}
		holds  on $T{\S}^{n-1}$ for all indices $(i,j)$, where $\delta_{ij}$ is the Kronecker tensor and the left-hand side is the result of applying the ray transform $I=I^0$ to the coordinates $f_{ij}$ considered as scalar functions on ${\R}^n$.
	\end{theorem}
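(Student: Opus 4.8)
The plan is to reduce Theorem~\ref{Th8.2} to the rank one case, Theorem~\ref{Th8.1}, by the contraction trick already used in the proof of Theorem~\ref{Th3.1}, and to convert the ambient derivatives $\partial/\partial x^j$ and $\partial/\partial\xi^j$ into the intrinsic operators $X_j,\Xi_j$ with the help of the homogeneity \eqref{2.3} and the shift law \eqref{2.4}. Fix an index $j$ and, exactly as in the proof of Theorem~\ref{Th3.1}, introduce the vector field $\tilde f\in\Sc(\R^n;\C^n)$ with ${\tilde f}_i=f_{ij}$. Since the scalar function $f_{ij}$ is nothing but the component ${\tilde f}_i$, the ray transform $If_{ij}$ equals $I{\tilde f}_i$, so Theorem~\ref{Th8.1} gives
\[
If_{ij}=(\Xi_i+\xi_i)(I^0{\tilde f})-X_i(I^1{\tilde f}).
\]
It thus remains to express $I^0{\tilde f}$ and $I^1{\tilde f}$ through $(I^0\!f,I^1\!f,I^2\!f)$ and the operators $X_j,\Xi_j$.

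For this I would differentiate the identity $(J^k\!f)(x,\xi)=\xi^p\xi^q\int_{-\infty}^{\infty}t^k f_{pq}(x+t\xi)\,\md t$ with respect to $\xi^j$, precisely as in the proof of Theorem~\ref{Th3.1}, obtaining $\partial(J^k\!f)/\partial\xi^j=2\,J^k{\tilde f}+\partial(J^{k+1}\!f)/\partial x^j$, hence $J^k{\tilde f}=\tfrac12\big(\partial_{\xi^j}(J^k\!f)-\partial_{x^j}(J^{k+1}\!f)\big)$. Next I would rewrite the ambient derivatives using \eqref{4.3} as $\partial/\partial x^j={\tilde X}_j+\xi_j\,\xi^p\partial/\partial x^p$ and $\partial/\partial\xi^j={\tilde \Xi}_j+x_j\,\xi^p\partial/\partial x^p+\xi_j\,\xi^p\partial/\partial\xi^p$, and evaluate the two radial derivatives on $J^k\!f$: differentiating \eqref{2.3} and \eqref{2.4} in $t$ at $t=1$ and $t=0$ respectively gives $\xi^p\partial(J^k\!f)/\partial\xi^p=(m-k-1)J^k\!f=(1-k)J^k\!f$ and $\xi^p\partial(J^k\!f)/\partial x^p=-k\,J^{k-1}\!f$ (here $m=2$). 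Substituting these and then restricting to $T\S^{n-1}$, where ${\tilde X}_j,{\tilde \Xi}_j$ become $X_j,\Xi_j$ by Lemma~\ref{L8.1} and $x_j,\xi_j$ become the coordinate functions, I obtain
\[
I^k{\tilde f}=\tfrac12\big(\Xi_j(I^k\!f)-X_j(I^{k+1}\!f)-k\,x_j\,I^{k-1}\!f+2\,\xi_j\,I^k\!f\big),\qquad k=0,1,
\]
the $\xi_j$--coefficient $2$ arising as $(1-k)+(k+1)$ from the two radial contributions.

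Finally I would substitute $I^0{\tilde f}=\tfrac12\big(\Xi_j(I^0\!f)-X_j(I^1\!f)+2\xi_j I^0\!f\big)$ and $I^1{\tilde f}=\tfrac12\big(\Xi_j(I^1\!f)-X_j(I^2\!f)-x_j I^0\!f+2\xi_j I^1\!f\big)$ into the formula $If_{ij}=(\Xi_i+\xi_i)(I^0{\tilde f})-X_i(I^1{\tilde f})$, expand by the Leibniz rule for the first order operators $X_i,\Xi_i$, and use the elementary identities $X_i\xi_j=0$, $X_i x_j=\delta_{ij}-\xi_i\xi_j$, $\Xi_i\xi_j=\delta_{ij}-\xi_i\xi_j$ on $T\S^{n-1}$, all immediate from \eqref{4.3}. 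Since the left-hand side is symmetric in $(i,j)$, the operator $\sigma(ij)$ may be applied to the whole expression; then the commutation relations \eqref{4.4}--\eqref{4.6}, in particular $\sigma(ij)X_i\Xi_j=\sigma(ij)\Xi_iX_j+\sigma(ij)\xi_iX_j$, allow one to collect all terms into the shape \eqref{4.14}, and a short check shows that the coefficients $-2\Xi_iX_j$, $-4\xi_iX_j$, $\Xi_i\Xi_j$, $x_iX_j$, $3\xi_i\Xi_j$, $3\delta_{ij}$, $-\xi_i\xi_j$ in front of $I^1\!f$, $I^0\!f$ come out exactly as stated.

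Every individual computation is elementary, so the main obstacle is purely the bookkeeping of the lower order corrections: the terms with $\xi_i$, $x_i$ and $\delta_{ij}$ arise from two independent mechanisms — the radial derivatives of $J^k\!f$ (via homogeneity in $\xi$ and the shift law in $x$) and the Leibniz rule applied to the products $\xi_j(I^k\!f)$, $x_j(I^k\!f)$ — and one must verify that after the $\sigma(ij)$--symmetrization and the use of \eqref{4.4}--\eqref{4.6} their coefficients combine to exactly those in \eqref{4.14}. One can alternatively expand \eqref{4.2} directly, converting each second order ambient derivative, but that route additionally requires the commutators of the radial operators $\xi^p\partial/\partial x^p$, $\xi^p\partial/\partial\xi^p$ with ${\tilde X}_i$, ${\tilde \Xi}_i$ and is appreciably messier; the reduction to the rank one case confines all non-commutativity to the relations already recorded in Lemma~\ref{L8.1}.
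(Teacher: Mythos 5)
Your proof is correct, and it takes a genuinely different route from the one the paper indicates. The paper proves Theorem \ref{Th8.1} in detail and disposes of Theorem \ref{Th8.2} with the remark that it is proved ``in the same way'', i.e.\ by starting from the $m=2$ identity \eqref{4.2} and converting the second-order ambient derivatives of $J^0\!f,J^1\!f,J^2\!f$ into $X_i,\Xi_i$ via second-order analogues of \eqref{4.16}--\eqref{4.21}. You instead contract one index, ${\tilde f}_i=f_{ij}$, apply the already proven rank-one formula \eqref{4.13} to $\tilde f$, and insert the intrinsic identities $I^k\tilde f=\tfrac12\big(\Xi_j(I^k\!f)-X_j(I^{k+1}\!f)-k\,x_j\,I^{k-1}\!f+2\,\xi_j\,I^k\!f\big)$ for $k=0,1$, which do hold on $T{\S}^{n-1}$: they follow from the recursion $\partial_{\xi^j}(J^k\!f)=2J^k\!\tilde f+\partial_{x^j}(J^{k+1}\!f)$ appearing in the proof of Theorem \ref{Th3.1}, combined with the correct Euler and shift relations for $m=2$, namely $\xi^p\partial_{\xi^p}(J^k\!f)=(1-k)J^k\!f$ and $\xi^p\partial_{x^p}(J^k\!f)=-kJ^{k-1}\!f$. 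Re-differentiating these identities with $\Xi_i,X_i$ is legitimate because they are equalities of smooth functions on $T{\S}^{n-1}$ and these operators are vector fields on that manifold. I checked the bookkeeping you left implicit: using $X_i\xi_j=0$ and $X_ix_j=\Xi_i\xi_j=\delta_{ij}-\xi_i\xi_j$, the terms acting on $I^1\!f$ come out as $-\tfrac12(\Xi_iX_j+X_i\Xi_j)-\tfrac12\xi_iX_j-\xi_jX_i$, which after $\sigma(ij)$ and the commutator $X_i\Xi_j=\Xi_jX_i+\xi_iX_j$ from \eqref{4.6} becomes $-\Xi_iX_j-2\xi_iX_j$, while the terms acting on $I^0\!f$ symmetrize to $\tfrac12\Xi_i\Xi_j+\tfrac12x_iX_j+\tfrac32\xi_i\Xi_j+\tfrac32\delta_{ij}-\tfrac12\xi_i\xi_j$; together with $\tfrac12X_iX_j(I^2\!f)$ this is exactly \eqref{4.14}, so the ``short check'' you deferred does close. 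What your organization buys is that no second-order ambient conversion is ever needed---all non-commutativity is confined to the relations of Lemma \ref{L8.1}---and the inductive structure makes visible how an intrinsic formula for general $m$ would be assembled; the paper's direct expansion of \eqref{4.2} avoids differentiating derived identities but is, as the authors themselves concede, more cumbersome.
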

	
	We present the proof of Theorem \ref{Th8.1}. Theorem \ref{Th8.2} is proved in the same way although all involved calculations are more cumbersome.
	
	\begin{proof}[Proof of Theorem \ref{Th8.1}]
		
		By the very definition of $X_i$ and $\Xi_i$, the equalities
		$$
		X_i\varphi={\tilde X}_i\psi,\quad \Xi_i\varphi={\tilde \Xi}_i\psi
		$$
		hold on $T{\S}^{n-1}$ for an arbitrary function $\varphi\in C^\infty(T{\S}^{n-1})$, where $\psi$ is an arbitrary smooth extension of $\varphi$ to some neighborhood of $T{\S}^{n-1}$ in ${\R}^n\times({\R}^n\setminus\{0\})$. For every $k$, the function $J^k\!f$ is an extension of $I^k\!f$. Therefore
		$$
		X_i(I^k\!f)={\tilde X}_i(J^k\!f),\quad \Xi_i(I^k\!f)={\tilde \Xi}_i(J^k\!f)\quad \mbox{on}\quad T{\S}^{n-1}.
		$$
		Substitute values \eqref{4.3} to obtain
		\begin{equation}
		\left.
		\begin{aligned}
		X_i(I^k\!f)&=\Big(\frac{\partial}{\partial x^i}-\xi_i\xi^p\frac{\partial}{\partial x^p}\Big)(J^k\!f),\\
		\Xi_i(I^k\!f)&=\Big(\frac{\partial}{\partial \xi^i}-x_i\xi^p\frac{\partial}{\partial x^p}-\xi_i\xi^p\frac{\partial}{\partial \xi^p}\Big)(J^k\!f)
		\end{aligned}
		\right\}\quad \mbox{on}\ T{\S}^{n-1};
		\label{4.15}
		\end{equation}
		
		Let us specify \eqref{4.15} for $k=0$.
		As is seen from \eqref{2.3}, the function $(J^0\!f)(x,\xi)$ is positively homogeneous of zero degree in the second argument. By the Euler equation for homogeneous functions,
		\begin{equation}
		\xi^p\frac{\partial(J^0\!f)}{\partial\xi^p}=0.
		\label{4.16}
		\end{equation}
		Besides this, $J^0\!f$ satisfies (see \eqref{2.4})
		$$
		(J^0\!f)(x+t\xi,\xi)=(J^0\!f)(x,\xi)\quad(t\in{\R}).
		$$
		Differentiating this identity with respect to $t$ and then setting $t=0$, we obtain
		\begin{equation}
		\xi^p\frac{\partial(J^0\!f)}{\partial x^p}=0.
		\label{4.17}
		\end{equation}
		In virtue of \eqref{4.16}--\eqref{4.17}, equalities \eqref{4.15} for $k=0$ are simplified to the following ones:
		\begin{equation}
		\frac{\partial(J^0\!f)}{\partial x^i}=X_i(I^0\!f),\quad
		\frac{\partial(J^0\!f)}{\partial \xi^i}=\Xi_i(I^0\!f)\quad \mbox{on}\ T{\S}^{n-1}.
		\label{4.18}
		\end{equation}
		
		Let us specify \eqref{4.15} for $k=1$.
		As is seen from \eqref{2.3}, the function $(J^1\!f)(x,\xi)$ is positively homogeneous of degree $-1$ in the second argument. By the Euler equation for homogeneous functions,
		\begin{equation}
		\xi^p\frac{\partial(J^1\!f)}{\partial\xi^p}=-J^1\!f.
		\label{4.19}
		\end{equation}
		Besides this, $J^1\!f$ satisfies (see \eqref{2.4})
		$$
		(J^1\!f)(x+t\xi,\xi)=-t(J^0\!f)(x+t\xi,\xi)+(J^1\!f)(x,\xi)\quad(t\in{\R}).
		$$
		Differentiating this identity with respect to $t$ and then setting $t=0$, we obtain
		\begin{equation}
		\xi^p\frac{\partial(J^1\!f)}{\partial x^p}=-(J^0\!f).
		\label{4.20}
		\end{equation}
		In virtue of \eqref{4.19}--\eqref{4.20}, equalities \eqref{4.15} for $k=1$ are simplified to the following ones:
		\begin{equation}
		\frac{\partial(J^1\!f)}{\partial x^i}=X_i(I^1\!f)-\xi_i(I^0\!f),\quad
		\frac{\partial(J^1\!f)}{\partial \xi^i}=(\Xi_i-\xi_i)(I^1\!f)-x_i(I^0\!f)\quad \mbox{on}\ T{\S}^{n-1}.
		\label{4.21}
		\end{equation}
		
		Inserting value \eqref{4.18} of $\frac{\partial(J^0\!f)}{\partial\xi^i}$ and value \eqref{4.21} of $\frac{\partial(J^1\!f)}{\partial x^i}$ into \eqref{4.1}, we arrive to \eqref{4.13}.
	\end{proof}
	
	Recall that $y_i$ is the Fourier dual variable of $x_i$.
	The formulas for commuting the Fourier transform with operators $X_i$ and $\Xi_i$ are described by the following
	
	\begin{lemma} \label{L8.2}
		The equalities
		\begin{equation}
		\widehat{X_i\varphi}=\textsl{i}\,y_i\,\hat\varphi,\quad
		\widehat{\Xi_i\varphi}=\Xi_i\hat\varphi,\quad
		\widehat{x_i\varphi}=\textsl{i}\,X_i\,\hat\varphi
		\label{4.22}
		\end{equation}
		hold for every function $\varphi\in{\mathcal S}(T{\S}^{n-1})$ and for every $i$, $1\leq i\leq n$.
	\end{lemma}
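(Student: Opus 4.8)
The plan is to reduce all three identities to elementary properties of the fibrewise Fourier transform \eqref{2.8}, working throughout with one fixed smooth extension of $\varphi$ off $T{\S}^{n-1}$: the positively homogeneous of degree zero function
$\bar\varphi(x,\xi):=\varphi\bigl(x-|\xi|^{-2}\l\xi,x\r\xi,\ \xi/|\xi|\bigr)$
on ${\R}^n\times({\R}^n\setminus\{0\})$, which coincides with $\varphi$ on $T{\S}^{n-1}$. A one-line chain-rule computation gives the two pointwise relations $\xi^p\,\partial\bar\varphi/\partial x^p\equiv0$ (the vectors $e_p-|\xi|^{-2}\xi_p\xi$ sum to zero against $\xi^p$) and $\xi^p\,\partial\bar\varphi/\partial\xi^p\equiv0$ (Euler's identity). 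Since $\tilde X_i$ and $\tilde\Xi_i$ are tangent to $T{\S}^{n-1}$ (Lemma \ref{L8.1}), applying them to $\bar\varphi$ and restricting computes $X_i\varphi$ and $\Xi_i\varphi$, and the two relations collapse definition \eqref{4.3} to $X_i\varphi=\bigl(\partial\bar\varphi/\partial x^i\bigr)|_{T{\S}^{n-1}}$ and $\Xi_i\varphi=\bigl(\partial\bar\varphi/\partial\xi^i\bigr)|_{T{\S}^{n-1}}$.

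For the first and third equalities I would observe that, on a fibre $\xi^\perp$, the function $\partial\bar\varphi/\partial x^i$ is the derivative of $\varphi$ along the vector $e_i-\xi_i\xi\in\xi^\perp$, and the coordinate $x_i$ restricted to $\xi^\perp$ equals $\l e_i-\xi_i\xi,x\r$. Holding $\xi$ fixed as a parameter and using $\l\xi,y\r=0$ on $T{\S}^{n-1}$, an integration by parts in \eqref{2.8} yields $\widehat{X_i\varphi}=\textsl{i}\,\l e_i-\xi_i\xi,y\r\,\hat\varphi=\textsl{i}\,y_i\hat\varphi$, while multiplication by $\l e_i-\xi_i\xi,x\r$ turns into $\textsl{i}$ times the directional derivative of $\hat\varphi$ in $y$ along $e_i-\xi_i\xi$ — which is precisely the action of $X_i$ on $\hat\varphi$, because $\tilde X_i$ has no $\partial/\partial\xi$-component and so differentiates only within a fibre — giving $\widehat{x_i\varphi}=\textsl{i}\,X_i\hat\varphi$.

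The essential identity is $\widehat{\Xi_i\varphi}=\Xi_i\hat\varphi$, where the difficulty is that $\Xi_i$ carries $\partial/\partial\xi^i$ while the domain $\xi^\perp$ of \eqref{2.8} moves with $\xi$. I would introduce the smooth extension $G(y,\xi):=(2\pi)^{-(n-1)/2}\int_{\xi^\perp}e^{-\textsl{i}\l y,x\r}\bar\varphi(x,\xi)\,\md x$ of $\hat\varphi$ and convert the slice integral into a Dirac mass on ${\R}^n$, $\int_{\xi^\perp}(\,\cdot\,)\,\md x=|\xi|\int_{{\R}^n}(\,\cdot\,)\,\delta(\l\xi,x\r)\,\md x$, so that the $\xi$-dependence of the domain becomes the $\xi$-dependence of the explicit factor $|\xi|\,\delta(\l\xi,x\r)$, which may be differentiated freely. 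Differentiating $G$ in $\xi^i$ produces three contributions: the one in which $\partial/\partial\xi^i$ lands on $\bar\varphi$, which equals $(2\pi)^{(n-1)/2}\widehat{\Xi_i\varphi}$ by the formula for $\Xi_i\varphi$ above; a term from differentiating the factor $|\xi|$; and a term involving $x_i\,\delta'(\l\xi,x\r)$. Evaluating the last one through the companion slice identity $\int_{{\R}^n}F\,\delta'(\l\xi,x\r)\,\md x=-|\xi|^{-3}\int_{\xi^\perp}\xi^p\partial_{x^p}F\,\md x$, and using $\xi^p\,\partial\bar\varphi/\partial x^p\equiv0$ together with $\l y,\xi\r=0$ on $T{\S}^{n-1}$, shows that it is exactly the negative of the $|\xi|$-term, so the two parasitic terms cancel and $\bigl(\partial G/\partial\xi^i\bigr)|_{T{\S}^{n-1}}=\widehat{\Xi_i\varphi}$. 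On the other hand $\tilde\Xi_iG=\partial G/\partial\xi^i$, because the remaining pieces of \eqref{4.3} drop out: $\xi^p\,\partial G/\partial y^p$ carries a factor $\l\xi,x\r\equiv0$ inside the integral and $\xi^p\,\partial G/\partial\xi^p=0$ by homogeneity. Hence $\Xi_i\hat\varphi=(\tilde\Xi_iG)|_{T{\S}^{n-1}}=\bigl(\partial G/\partial\xi^i\bigr)|_{T{\S}^{n-1}}=\widehat{\Xi_i\varphi}$.

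I expect the only genuine obstacle to be the bookkeeping in this last step — correctly differentiating the $\xi$-dependent slice, i.e.\ handling the $x_i\,\delta'(\l\xi,x\r)$ contribution, and verifying that the two unwanted terms cancel on $T{\S}^{n-1}$; everything else is routine once the extension $\bar\varphi$ and the Dirac-mass rewriting are in place. If one prefers to avoid distributions, the same cancellation can be performed after parametrizing $\xi^\perp$ near a reference direction as a graph over a fixed coordinate hyperplane and differentiating the resulting fixed-domain integral; alternatively the identity can be proved weakly, by pairing both sides against a test field, applying fibrewise Plancherel, integrating by parts on $T{\S}^{n-1}$ against the vector field $\Xi_i$, and using the first and third equalities together with the fact that the divergence of $\Xi_i$ with respect to $\md x\,\md\xi$ is a multiple of $\xi_i$.
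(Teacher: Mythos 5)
Your proposal is correct, and for the first and third identities it is essentially the paper's argument: same homogeneous extension $\bar\varphi=\psi$, same two relations $\xi^p\partial\psi/\partial x^p=0=\xi^p\partial\psi/\partial\xi^p$, and the same integration by parts / differentiation under the fibre integral. For the key identity $\widehat{\Xi_i\varphi}=\Xi_i\hat\varphi$, however, you take a genuinely different route. The paper handles the moving domain by pulling the integral over $\xi^\bot$ back to a fixed hyperplane $\xi_0^\bot$ via the orthogonal projection \eqref{4.28} (with a Jacobian factor), differentiating the fixed-domain integral, setting $\xi=\xi_0$, and then assembling $\Xi_i\hat\varphi$ from the auxiliary formulas \eqref{4.26}, \eqref{4.27}, \eqref{4.29}, \eqref{4.30}. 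You instead keep the slice integral, encode the $\xi$-dependence of the domain in the factor $|\xi|\,\delta(\l\xi,x\r)$, differentiate freely, and observe that the two parasitic terms cancel on $T{\S}^{n-1}$, while the remaining pieces of ${\tilde\Xi}_i$ acting on your extension $G$ die by $\l\xi,x\r=0$ inside the integral and by degree-zero homogeneity of $G$ in $\xi$. Both arguments are sound; the paper's stays within elementary calculus (no distributions) at the cost of the change-of-variables bookkeeping and the intermediate identities \eqref{4.29}--\eqref{4.30}, whereas yours is shorter once the two slice identities for $\delta$ and $\delta'$ are granted (they are standard, and your $\delta'$ formula is stated correctly), and it dispenses with the analogue of \eqref{4.30} altogether. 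One remark so you are not puzzled when comparing with the paper: your conclusion $\bigl(\partial G/\partial\xi^i\bigr)|_{T{\S}^{n-1}}=\widehat{\Xi_i\varphi}$ differs from the paper's \eqref{4.29} by a term $-\xi_i\hat\varphi$; this is not a contradiction, because after the change of variables the paper is differentiating a different extension of $\hat\varphi$ in the $\xi$-variable than your $G$, and only the tangential combination $\Xi_i\hat\varphi$ is extension-independent — both computations produce the same intrinsic result. The only points to spell out in a final write-up are the justification for differentiating the $\delta$-pairing in $\xi$ (smooth dependence plus rapid decay of $\bar\varphi$ along the hyperplane, which holds since $\varphi\in{\mathcal S}(T{\S}^{n-1})$) and the observation, which you make, that $\l\xi,y\r=0$ is used in evaluating the $\delta'$ term, so the identity is obtained on $T{\S}^{n-1}$ exactly as required.
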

	
	\begin{proof}
		Given a function $\varphi\in{\mathcal S}(T{\S}^{n-1})$, define $\psi\in C^\infty\big({\R}^n\times({\R}^n\setminus\{0\})\big)$ by
		$$
		\psi(x,\xi)=\varphi\big(x-\frac{\l\xi,x\r}{|\xi|^2}\xi,\frac{\xi}{|\xi|}\big).
		$$
		The function satisfies the identities
		$$
		\psi(x,t\xi)=\psi(x,\xi)\ (0\neq t\in{\R}),\qquad\psi(x,x+t\xi)=\psi(x,\xi)\ (t\in{\R})
		$$
		which imply (by the same arguments that were used for deriving \eqref{4.16}--\eqref{4.17})
		\begin{equation}
		\xi^p\frac{\partial\psi}{\partial x^p}=0,\quad \xi^p\frac{\partial\psi}{\partial \xi^p}=0.
		\label{4.23}
		\end{equation}
		
		For $(y,\xi)\in T{\S}^{n-1}$, the definition \eqref{2.8} of the Fourier transform can be written in terms of $\psi$:
		\begin{equation}
		\hat\varphi(y,\xi)=(2\pi)^{(1-n)/2}\int\limits_{\xi^\bot}e^{-\textsl{i}\,\l y,x\r}\psi(x,\xi)\,\md x.
		\label{4.24}
		\end{equation}
		This formula makes sense for any $(y,\xi)\in{\R}^n\times\lb{\R}^n\setminus\{0\}\rb$ sufficiently close to the submanifold $T{\S}^{n-1}\subset{\R}^n\times\lb{\R}^n\setminus\{0\}\rb$. Therefore the partial derivatives $\frac{\partial\hat\varphi}{\partial y^i}$ and $\frac{\partial\hat\varphi}{\partial \xi^i}$ make sense.
		
		By \eqref{4.3},
		$$
		X_i\varphi={\tilde X}_i\psi=\frac{\partial\psi}{\partial x^i}-\xi_i\xi^p\frac{\partial\psi}{\partial x^p}.
		$$
		With the help of \eqref{4.23}, this is simplified to
		$$
		X_i\varphi=\frac{\partial\psi}{\partial x^i}.
		$$
		Applying the Fourier transform to this equality, we obtain
		$$
		\widehat{X_i\varphi}(y,\xi)=(2\pi)^{(1-n)/2}\int\limits_{\xi^\bot}e^{-\textsl{i}\,\l y,x\r}\frac{\partial\psi}{\partial x^i}(x,\xi)\,\md x.
		$$
		From this with the help of integration by parts
		$$
		\widehat{X_i\varphi}(y,\xi)=\textsl{i}\,y_i\,(2\pi)^{-(n\!-\!1)/2}\int\limits_{\xi^\bot}e^{-\textsl{i}\,\l y,x\r}\psi(x,\xi)\,\md x
		=\textsl{i}\,y_i\,\hat\varphi(y,\xi).
		$$
		This proves the first of formulas \eqref{4.22}.
		
		Differentiate equality \eqref{4.24} with respect to $y^i$
		\begin{equation}
		\frac{\partial\hat\varphi}{\partial y^i}(y,\xi)=-\textsl{i}\,(2\pi)^{(1-n)/2}\int\limits_{\xi^\bot}e^{-\textsl{i}\,\l y,x\r}x_i\psi(x,\xi)\,\md x.
		\label{4.25}
		\end{equation}
		From this
		\begin{equation}
		\xi^p\frac{\partial\hat\varphi}{\partial y^p}(y,\xi)=-\textsl{i}\,(2\pi)^{(1-n)/2}\int\limits_{\xi^\bot}e^{-\textsl{i}\,\l y,x\r}\l\xi,x\r\psi(x,\xi)\,\md x=0.
		\label{4.26}
		\end{equation}
		
		By \eqref{4.3},
		$$
		\Xi_i\hat\varphi=\frac{\partial\hat\varphi}{\partial \xi^i}-y_i\xi^p\frac{\partial\hat\varphi}{\partial y^p}-\xi_i\xi^p\frac{\partial\hat\varphi}{\partial \xi^p}.
		$$
		In virtue of \eqref{4.26}, this is simplified to
		\begin{equation}
		\Xi_i\hat\varphi=\frac{\partial\hat\varphi}{\partial \xi^i}-\xi_i\xi^p\frac{\partial\hat\varphi}{\partial \xi^p}.
		\label{4.27}
		\end{equation}
		
		The differentiation of equality \eqref{4.24} with respect to $\xi^i$ is not very easy because the integration hyperplane $\xi^\bot$ depends on $\xi$. We first transform integral \eqref{4.24} into an integral over a hyperplane independent of $\xi$. Fix a vector $\xi_0\in{\S}^{n-1}$. For an arbitrary vector $\xi\in{\S}^{n-1}$ sufficiently close to $\xi_0$, the orthogonal projection
		\begin{equation}
		\xi_0^\bot\rightarrow\xi^\bot,\quad x'\mapsto x=x'-\l\xi,x'\r\xi
		\label{4.28}
		\end{equation}
		is one-to-one. We change the integration variable in \eqref{4.24} according to \eqref{4.28}. The Jacobian of the change is $\l\xi_0,\xi\r^{-1}$. After the change, formula \eqref{4.24} takes the form
		$$
		\hat\varphi(y,\xi)=\frac{(2\pi)^{(1-n)/2}}{\l\xi_0,\xi\r}\int\limits_{\xi_0^\bot}
		e^{-\textsl{i}\,\l y,x'-\l\xi,x'\r\xi\r}\,\psi(x'-\l\xi,x'\r\xi,\xi)\,\md x'.
		$$
		We can now differentiate this equality with respect to $\xi^i$
		$$
		\begin{aligned}
		\frac{\partial\hat\varphi}{\partial\xi^i}(y,\xi)&=-\frac{\xi_{0,i}}{\l\xi_0,\xi\r^2}(2\pi)^{(1-n)/2}\int\limits_{\xi_0^\bot}
		e^{-\textsl{i}\,\l y,x'-\l\xi,x'\r\xi\r}\,\psi(x'-\l\xi,x'\r\xi,\xi)\,\md x'\\
		&+\frac{1}{\l\xi_0,\xi\r}(2\pi)^{(1-n)/2}\int\limits_{\xi_0^\bot}
		\frac{\partial e^{-\textsl{i}\,\l y,x'-\l\xi,x'\r\xi\r}}{\partial\xi^i}\,\psi(x'-\l\xi,x'\r\xi,\xi)\,\md x'\\
		&+\frac{1}{\l\xi_0,\xi\r}(2\pi)^{(1-n)/2}\int\limits_{\xi_0^\bot}
		e^{-\textsl{i}\,\l y,x'-\l\xi,x'\r\xi\r}\frac{\partial\psi(x'-\l\xi,x'\r\xi,\xi)}{\partial\xi^i}\,\md x'.
		\end{aligned}
		$$
		Substituting the values
		$$
		\frac{\partial e^{-\textsl{i}\,\l y,x'-\l\xi,x'\r\xi\r}}{\partial\xi^i}
		=\textsl{i}\,e^{-\textsl{i}\,\l y,x'-\l\xi,x'\r\xi\r}(\l\xi,y\r x'_i+\l\xi,x'\r y_i)
		$$
		and
		$$
		\begin{aligned}
		\frac{\partial\psi(x'-\l\xi,x'\r\xi,\xi)}{\partial\xi^i}&=
		\frac{\partial\psi}{\partial\xi^i}(x'-\l\xi,x'\r\xi,\xi)
		-\l\xi,x'\r\frac{\partial\psi}{\partial x^i}(x'-\l\xi,x'\r\xi,\xi)\\
		&-x'_i\xi^p\frac{\partial\psi}{\partial x^p}(x'-\l\xi,x'\r\xi,\xi),
		\end{aligned}
		$$
		we obtain
		$$
		\begin{aligned}
		\frac{\partial\hat\varphi}{\partial\xi^i}(y,\xi)&=-\frac{\xi_{0,i}}{\l\xi_0,\xi\r^2}(2\pi)^{(1-n)/2}\int\limits_{\xi_0^\bot}
		e^{-\textsl{i}\,\l y,x'-\l\xi,x'\r\xi\r}\,\psi(x'-\l\xi,x'\r\xi,\xi)\,\md x'\\
		&+\frac{\textsl{i}}{\l\xi_0,\xi\r}(2\pi)^{(1-n)/2}\int\limits_{\xi_0^\bot}
		e^{-\textsl{i}\,\l y,x'-\l\xi,x'\r\xi\r}(\l\xi,y\r x'_i+\l\xi,x'\r y_i)\,\psi(x'-\l\xi,x'\r\xi,\xi)\,\md x'\\
		&+\frac{1}{\l\xi_0,\xi\r}(2\pi)^{(1-n)/2}\int\limits_{\xi_0^\bot}
		e^{-\textsl{i}\,\l y,x'-\l\xi,x'\r\xi\r}\frac{\partial\psi}{\partial\xi^i}(x'-\l\xi,x'\r\xi,\xi)\,\md x'\\
		&-\frac{1}{\l\xi_0,\xi\r}(2\pi)^{(1-n)/2}\int\limits_{\xi_0^\bot}
		e^{-\textsl{i}\,\l y,x'-\l\xi,x'\r\xi\r}\l\xi,x'\r\frac{\partial\psi}{\partial x^i}(x'-\l\xi,x'\r\xi,\xi)\,\md x'\\
		&-\frac{1}{\l\xi_0,\xi\r}(2\pi)^{(1-n)/2}\int\limits_{\xi_0^\bot}
		e^{-\textsl{i}\,\l y,x'-\l\xi,x'\r\xi\r}\,x'_i\xi^p\,\frac{\partial\psi}{\partial x^p}(x'-\l\xi,x'\r\xi,\xi)\,\md x'.
		\end{aligned}
		$$
		On assuming $(y,\xi_0)\in T{\S}^{n-1}$, we set $\xi=\xi_0$ in the latter formula. The formula simplifies to the following one:
		$$
		\begin{aligned}
		\frac{\partial\hat\varphi}{\partial\xi^i}(y,\xi_0)&=-\xi_{0,i}(2\pi)^{(1-n)/2}\int\limits_{\xi_0^\bot}
		e^{-\textsl{i}\,\l y,x'\r}\,\psi(x',\xi_0)\,\md x'\\
		&+(2\pi)^{(1-n)/2}\int\limits_{\xi_0^\bot}e^{-\textsl{i}\,\l y,x'\r}\,\frac{\partial\psi}{\partial\xi^i}(x',\xi_0)\,\md x'\\
		&-(2\pi)^{(1-n)/2}\int\limits_{\xi_0^\bot}
		e^{-\textsl{i}\,\l y,x'\r}\,x'_i\,\xi_0^p\,\frac{\partial\psi}{\partial x^p}(x',\xi_0)\,\md x'.
		\end{aligned}
		$$
		By \eqref{4.23}, the integrand of the last integral is identically equal to zero. Thus, replacing the notations $\xi_0$ and $x'$ with $\xi$ and $x$ respectively, we obtain
		\begin{equation}
		\frac{\partial\hat\varphi}{\partial\xi^i}(y,\xi)=
		(2\pi)^{(1-n)/2}\int\limits_{\xi^\bot}e^{-\textsl{i}\,\l y,x\r}\,\frac{\partial\psi}{\partial\xi^i}(x,\xi)\,\md x
		-\xi_i\,\hat\varphi(y,\xi)
		\quad\mbox{for}\quad(y,\xi)\in T{\S}^{n-1}.
		\label{4.29}
		\end{equation}
		
		With the help of \eqref{2.8}, we obtain from \eqref{4.29}
		\begin{equation}
		\xi^p\frac{\partial\hat\varphi}{\partial\xi^p}(y,\xi)=
		(2\pi)^{(1-n)/2}\int\limits_{\xi^\bot}e^{-\textsl{i}\,\l y,x\r}\,\xi^p\frac{\partial\psi}{\partial\xi^p}(x,\xi)\,\md x
		-\hat\varphi(y,\xi)=-\hat\varphi(y,\xi)
		\ \mbox{for}\ (y,\xi)\in T{\S}^{n-1}.
		\label{4.30}
		\end{equation}
		
		Substitute values \eqref{4.29} and \eqref{4.30} into \eqref{4.27} to obtain
		\begin{equation}
		(\Xi_i\hat\varphi)(y,\xi)=(2\pi)^{(1-n)/2}\int\limits_{\xi^\bot}e^{-\textsl{i}\,\l y,x\r}\,\frac{\partial\psi}{\partial\xi^i}(x,\xi)\,\md x
		\quad\mbox{for}\quad(y,\xi)\in T{\S}^{n-1}.
		\label{4.31}
		\end{equation}
		
		By \eqref{4.23},
		$$
		({\tilde \Xi}_i\psi)(x,\xi)=\frac{\partial\psi}{\partial\xi^i}(x,\xi)\quad\mbox{for}\quad(x,\xi)\in T{\S}^{n-1}.
		$$
		Therefore formula \eqref{4.31} can be written as
		$$
		(\Xi_i\hat\varphi)(y,\xi)=(2\pi)^{(1-n)/2}\int\limits_{\xi^\bot}e^{-\textsl{i}\,\l y,x\r}\,({\tilde \Xi}_i\psi)(x,\xi)\,\md x
		=\widehat{\Xi_i\varphi}(y,\xi)
		\quad\mbox{for}\quad(y,\xi)\in T{\S}^{n-1}.
		$$
		This proves the second of formulas \eqref{4.22}.
		
		By the definition of $X_i$,
		$$
		X_i\hat\varphi=\frac{\partial\hat\varphi}{\partial y^i}-\xi_i\xi^p\frac{\partial\hat\varphi}{\partial y^p}.
		$$
		In virtue of \eqref{4.26}, this is simplified to
		$$
		X_i\hat\varphi=\frac{\partial\hat\varphi}{\partial y^i}.
		$$
		With the help of \eqref{4.25}, this gives
		$$
		X_i\hat\varphi=-\textsl{i}\,(2\pi)^{(1-n)/2}\int\limits_{\xi^\bot}e^{-\textsl{i}\,\l y,x\r}x_i\varphi(x,\xi)\,\md x
		=-\textsl{i}\,\widehat{x_i\varphi}.
		$$
		This proves the last of formulas \eqref{4.22}.
	\end{proof}

	\subsection{Reshetnyak formula for vector fields}
	
	\begin{theorem} \label{Rf1}
		Given a vector field $f\in \Sc(\Rb^{n};{\C}^n)$, the equality
		\begin{equation}
		\begin{aligned}
		\|f\|^2_{H^s_t({\R}^n;{\C}^n)}=a_n\Big[&
		\sum\limits_{i=1}^n\|\Xi_i(I^0\!f)\|^2_{H^{s+1/2}_{t+1/2}(T{\S}^{n-1})}
		+\|I^0\!f\|^2_{H^{s+1/2}_{t+1/2}(T{\S}^{n-1})}\\
		&+\|I^1\!f\|^2_{H^{s+3/2}_{t+3/2}(T{\S}^{n-1})}
		+2\Re\big(\textsl{i}\,Z(I^0\!f),I^1\!f\big)_{H^{s+1}_{t+1}(T{\S}^{n-1})}\Big]
		\end{aligned}
		\label{4.32}
		\end{equation}
		holds for every $s\in\R$ and every $t>-n/2$, where the constant $a_n$ is defined by \eqref{2.13} and $Z$ is the first order pseudodifferential operator
		$$
		Z:{\mathcal S}(T{\S}^{n-1})\rightarrow C^\infty(T{\S}^{n-1})
		$$
		defined by
		\begin{equation}
		\widehat{Z\varphi}(y,\xi)=\frac{y^i}{|y|}(\Xi_i\hat\varphi)(y,\xi).
		\label{4.33}
		\end{equation}
	\end{theorem}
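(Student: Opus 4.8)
The plan is to deduce the vector-field Reshetnyak formula from the scalar one \eqref{2.12} applied to each Cartesian coordinate $f_i\in\Sc(\R^n)$, and then to repackage the resulting sum in intrinsic terms on $T\S^{n-1}$ using Theorem \ref{Th8.1} and the commutation rules of Lemma \ref{L8.2}. Since $\|f\|^2_{H^s_t(\R^n;\C^n)}=\sum_{i=1}^n\|f_i\|^2_{H^s_t(\R^n)}$, formula \eqref{2.12} reduces the claim to the weight-free identity
$$
\sum_{i=1}^n\|If_i\|^2_{H^{s+1/2}_{t+1/2}(T\S^{n-1})}=\sum_{i=1}^n\|\Xi_i(I^0\!f)\|^2_{H^{s+1/2}_{t+1/2}}+\|I^0\!f\|^2_{H^{s+1/2}_{t+1/2}}+\|I^1\!f\|^2_{H^{s+3/2}_{t+3/2}}+2\Re\big(\textsl{i}\,Z(I^0\!f),I^1\!f\big)_{H^{s+1}_{t+1}},
$$
where Theorem \ref{Th8.1} supplies $If_i=(\Xi_i+\xi_i)(I^0\!f)-X_i(I^1\!f)$ for the left-hand side.

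Next I would pass to the Fourier transform \eqref{2.8} on $T\S^{n-1}$. By Lemma \ref{L8.2} and the fact that $\xi$ is a mere parameter of that transform, one has $\widehat{If_i}=\Xi_i\hat u+\xi_i\hat u-\textsl{i}\,y_i\hat v$ on $T\S^{n-1}$, with $\hat u=\widehat{I^0\!f}$ and $\hat v=\widehat{I^1\!f}$. Inserting this into the norm \eqref{2.11} with indices $(s+\tfrac12,t+\tfrac12)$, I would expand $\sum_i|\widehat{If_i}|^2$ into three ``diagonal'' contributions and three cross terms. The diagonal terms are $\sum_i|\Xi_i\hat u|^2$, then $\big(\sum_i\xi_i^2\big)|\hat u|^2=|\hat u|^2$ (using $|\xi|=1$), and $\big(\sum_i y_i^2\big)|\hat v|^2=|y|^2|\hat v|^2$; integrated against $|y|^{2t+1}(1+|y|^2)^{s-t}$ they yield, respectively, $\sum_i\|\Xi_i(I^0\!f)\|^2_{H^{s+1/2}_{t+1/2}}$, $\|I^0\!f\|^2_{H^{s+1/2}_{t+1/2}}$, and — the factor $|y|^2$ being absorbed into the weight — $\|I^1\!f\|^2_{H^{s+3/2}_{t+3/2}}$. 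For the cross terms: the one between $\Xi_i\hat u$ and $\xi_i\hat u$ vanishes since $\xi^i\Xi_i=0$ by \eqref{4.7}; the one between $\xi_i\hat u$ and $-\textsl{i}\,y_i\hat v$ vanishes since $\langle y,\xi\rangle=0$ on the integration hyperplane $\xi^\bot$ in \eqref{2.11}; and the surviving one equals $2\Re\big(\textsl{i}\sum_i y_i(\Xi_i\hat u)\overline{\hat v}\big)=2\Re\big(\textsl{i}\,|y|\,\widehat{Z(I^0\!f)}\,\overline{\hat v}\big)$ by the definition \eqref{4.33} of $Z$. Integrating this against $|y|^{2t+1}(1+|y|^2)^{s-t}$ produces the weight $|y|^{2(t+1)}(1+|y|^2)^{s-t}$ of the $H^{s+1}_{t+1}$-inner product, hence the term $2\Re\big(\textsl{i}\,Z(I^0\!f),I^1\!f\big)_{H^{s+1}_{t+1}}$. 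Multiplying the assembled identity by $a_n$ gives \eqref{4.32}.

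The genuinely delicate point is the handling of the three cross terms: one must recognize that two of them vanish identically on $T\S^{n-1}$ — precisely where the structural relations $\xi^i\Xi_i=0$ and $y\perp\xi$ do the work — and that the third is exactly the pseudodifferential pairing built from $Z$. Everything else is routine bookkeeping: one records at the outset that $I^0\!f,I^1\!f\in\Sc(T\S^{n-1})$ and that $\Xi_i$ preserves $\Sc(T\S^{n-1})$, that $t>-n/2$ guarantees $t+\tfrac12>-(n-1)/2$ (and a fortiori that the $H^{s+3/2}_{t+3/2}$- and $H^{s+1}_{t+1}$-norms are defined), and that $|y^i|/|y|\le1$ keeps the $Z$-contribution integrable near $y=0$, so that all the manipulations above are legitimate.
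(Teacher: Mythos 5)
Your proposal is correct and follows essentially the same route as the paper's proof: the scalar Reshetnyak formula applied componentwise, Theorem \ref{Th8.1} for $If_i$, Fourier expansion via Lemma \ref{L8.2}, cancellation of cross terms through $\xi^i\Xi_i=0$ and $\langle\xi,y\rangle=0$, and identification of the surviving cross term with the $Z$-pairing after the weight shift. The only difference is your added (harmless) bookkeeping about admissibility of the indices $t+\tfrac12$, $t+1$, $t+\tfrac32$, which the paper leaves implicit.
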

	
	\begin{proof}
		Apply the Fourier transform to formula \eqref{4.13}
		$$
		\widehat{If_i}=-\widehat{X_i(I^1\!f)}+\widehat{\Xi_i(I^0\!f)}+\xi_i\,\widehat{I^0\!f}.
		$$
		With the help of Lemma \ref{L8.2}, this gives
		\begin{equation}
		\widehat{If_i}=-\textsl{i}\,y_i\,\widehat{I^1\!f}+\Xi_i\widehat{I^0\!f}+\xi_i\,\widehat{I^0\!f}.
		\label{4.34}
		\end{equation}
		
		For a vector field $f=(f_i)$,
		$$
		\|f\|^2_{H^s_t({\R}^n;{\C}^n)}=\sum\limits_{i=1}^n\|f_i\|^2_{H^s_t({\R}^n)}.
		$$
		Applying the Reshetnyak formula \eqref{2.12} for scalar functions, we obtain
		$$
		\|f\|^2_{H^s_t({\R}^n;{\C}^n)}=a_n\sum\limits_{i=1}^n\|\widehat{If_i}\|^2_{H^{s+1/2}_{t+1/2}(T{\S}^{n-1})}.
		$$
		On using the definition \eqref{2.11} of the $H^{s+1/2}_{t+1/2}(T{\S}^{n-1})$-norm, this is written as
		\begin{equation}
		\|f\|^2_{H^s_t({\R}^n;{\C}^n)}=\frac{a_n}{2\pi}\int\limits_{{\S}^{n-1}}\int\limits_{\xi^\bot}|y|^{2t+1}(1+|y|^2)^{s-t}
		\sum\limits_{i=1}^n|\widehat{If_i}(y,\xi)|^2\,\md y\md \xi.
		\label{4.35}
		\end{equation}
		
		Now, we calculate the integrand on \eqref{4.35}. By \eqref{4.34},
		$$
		\sum\limits_{i=1}^n|\widehat{If_i}|^2=
		\sum\limits_{i=1}^n\Big(\Xi_i\widehat{I^0\!f}+\xi_i\,\widehat{I^0\!f}-\textsl{i}\,y_i\,\widehat{I^1\!f}\Big)\,
		\Big(\Xi_i\overline{\widehat{I^0\!f}}+\xi_i\,\overline{\widehat{I^0\!f}}+\textsl{i}\,y_i\overline{\widehat{I^1\!f}}\Big).
		$$
		After opening parentheses and using the equalities $|\xi|^2=1,\l\xi,y\r=0$, this gives
		$$
		\sum\limits_{i=1}^n|\widehat{If_i}|^2=
		\sum\limits_{i=1}^n|\Xi_i\widehat{I^0\!f}|^2+|\widehat{I^0\!f}|^2+|y|^2|\widehat{I^1\!f}|^2
		+2\Re\big((\xi^i\Xi_i)\widehat{I^0\!f}\cdot\overline{\widehat{I^0\!f}}\big)
		+2\Re\big(\textsl{i}\,(y^i\Xi_i)\widehat{I^0\!f}\cdot\overline{\widehat{I^1\!f}}\big).
		$$
		By \eqref{4.7}, $\xi^i\Xi_i=0$, and the formula takes the form
		$$
		\sum\limits_{i=1}^n|\widehat{If_i}|^2=
		\sum\limits_{i=1}^n|\Xi_i\widehat{I^0\!f}|^2+|\widehat{I^0\!f}|^2+|y|^2|\widehat{I^1\!f}|^2
		+2\Re\big(\textsl{i}\,(y^i\Xi_i)\widehat{I^0\!f}\cdot\overline{\widehat{I^1\!f}}\big).
		$$
		On using the operator \eqref{4.33}, this can be written as
		\begin{equation}
		\sum\limits_{i=1}^n|\widehat{If_i}|^2=
		\sum\limits_{i=1}^n|\Xi_i\widehat{I^0\!f}|^2+|\widehat{I^0\!f}|^2+|y|^2|\widehat{I^1\!f}|^2
		+2|y|\,\Re\big(\textsl{i}\,\widehat{Z(I^0\!f)}\cdot\overline{\widehat{I^1\!f}}\big).
		\label{4.36}
		\end{equation}
		
		Substituting the value \eqref{4.36} for $\sum\limits_{i=1}^n|\widehat{If_i}|^2$ into \eqref{4.35}, we get,
		$$
		\begin{aligned}
		\|f\|^2_{H^s_t({\R}^n;{\C}^n)}=a_n\Big[&
		\sum\limits_{i=1}^n\frac{1}{2\pi}\int\limits_{{\S}^{n-1}}\int\limits_{\xi^\bot}|y|^{2t+1}(1+|y|^2)^{s-t}|\Xi_i\widehat{I^0\!f}(y,\xi)|^2\,\md y\md \xi\\
		&+\frac{1}{2\pi}\int\limits_{{\S}^{n-1}}\int\limits_{\xi^\bot}|y|^{2t+1}(1+|y|^2)^{s-t}|\widehat{I^0\!f}(y,\xi)|^2\,\md y\md \xi\\
		&+\frac{1}{2\pi}\int\limits_{{\S}^{n-1}}\int\limits_{\xi^\bot}|y|^{2t+3}(1+|y|^2)^{s-t}|\widehat{I^1\!f}(y,\xi)|^2\,\md y\md \xi\\
		&+2\Re\Big(\frac{1}{2\pi}\int\limits_{{\S}^{n-1}}\int\limits_{\xi^\bot}|y|^{2t+2}(1+|y|^2)^{s-t}
		\textsl{i}\,\widehat{Z(I^0\!f)}(y,\xi)\,\overline{\widehat{I^1\!f}(y,\xi)}\,\md y\md \xi\Big)
		\Big].
		\end{aligned}
		$$
		On using the statement $\Xi_i\widehat{I^0\!f}=\widehat{\Xi_i(I^0\!f)}$ of Lemma \ref{L8.2}, we rewrite the latter formula as
		$$
		\begin{aligned}
		\|f\|^2_{H^s_t({\R}^n;{\C}^n)}=a_n\Big[&
		\sum\limits_{i=1}^n\frac{1}{2\pi}\int\limits_{{\S}^{n-1}}\int\limits_{\xi^\bot}|y|^{2t+1}(1+|y|^2)^{s-t}|\widehat{\Xi_i(I^0\!f)}(y,\xi)|^2\,\md y\md \xi\\
		&+\frac{1}{2\pi}\int\limits_{{\S}^{n-1}}\int\limits_{\xi^\bot}|y|^{2t+1}(1+|y|^2)^{s-t}|\widehat{I^0\!f}(y,\xi)|^2\,\md y\md \xi\\
		&+\frac{1}{2\pi}\int\limits_{{\S}^{n-1}}\int\limits_{\xi^\bot}|y|^{2t+3}(1+|y|^2)^{s-t}|\widehat{I^1\!f}(y,\xi)|^2\,\md y\md \xi\\
		&+2\Re\Big(\frac{1}{2\pi}\int\limits_{{\S}^{n-1}}\int\limits_{\xi^\bot}|y|^{2t+2}(1+|y|^2)^{s-t}
		\textsl{i}\,\widehat{Z(I^0\!f)}(y,\xi)\,\overline{\widehat{I^1\!f}(y,\xi)}\,\md y\md \xi\Big)
		\Big].
		\end{aligned}
		$$
		In view of \eqref{2.11}, this is equivalent to \eqref{4.32}.
	\end{proof}
	
	Formula \eqref{4.32} suggests the idea of introducing the norm
	\begin{equation}
	\begin{aligned}
	\|(\varphi,\psi)\|^2_{{\mathcal H}^{1,s}_t(T{\S}^{n-1})}=a_n\Big[&
	\sum\limits_{i=1}^n\|\Xi_i\varphi\|^2_{H^{s+1/2}_{t+1/2}(T{\S}^{n-1})}
	+\|\varphi\|^2_{H^{s+1/2}_{t+1/2}(T{\S}^{n-1})}\\
	&+\|\psi\|^2_{H^{s+3/2}_{t+3/2}(T{\S}^{n-1})}
	+2\Re\big(\textsl{i}\,Z\varphi,\psi\big)_{H^{s+1}_{t+1}(T{\S}^{n-1})}\Big]
	\end{aligned}
	\label{4.37}
	\end{equation}
	on the space ${\mathcal S}(T{\S}^{n-1})\oplus{\mathcal S}(T{\S}^{n-1})$.
	Let us demonstrate that the right-hand side of \eqref{4.37} is positive for $(\varphi,\psi)\neq(0,0)$. Indeed, the inequality
	\begin{equation}
	\left|(\varphi,\psi)_{H^{s+1}_{t+1}(T{\S}^{n-1})}\right|\le\|\varphi\|_{H^{s+1/2}_{t+1/2}(T{\S}^{n-1})}\|\psi\|_{H^{s+3/2}_{t+3/2}(T{\S}^{n-1})}
	\label{4.37a}
	\end{equation}
	easily follows from the definition of $H^s_t$-norms with the help of the Schwartz inequality. In particular,
	$$
	\begin{aligned}
	\left|(\textsl{i}\,Z\varphi,\psi)_{H^{s+1}_{t+1}(T{\S}^{n-1})}\right|&\le
	\|Z\varphi\|_{H^{s+1/2}_{t+1/2}(T{\S}^{n-1})}\|\psi\|_{H^{s+3/2}_{t+3/2}(T{\S}^{n-1})}\\
	&\le\frac{1}{2}\|Z\varphi\|^2_{H^{s+1/2}_{t+1/2}(T{\S}^{n-1})}+\frac{1}{2}\|\psi\|^2_{H^{s+3/2}_{t+3/2}(T{\S}^{n-1})}.
	\end{aligned}
	$$
	The inequality
	$$
	\|Z\varphi\|^2_{H^{s+1/2}_{t+1/2}(T{\S}^{n-1})}\le\sum\limits_{i=1}^n\|\Xi_i\varphi\|^2_{H^{s+1/2}_{t+1/2}(T{\S}^{n-1})}
	$$
	easily follows from definition \eqref{4.33} of the operator $Z$. Together with the previous inequality, it gives
	\begin{equation}
	2\left|\big(\textsl{i}\,Z\varphi,\psi\big)_{H^{s+1}_{t+1}(T{\S}^{n-1})}\right|\le\sum\limits_{i=1}^n\|\Xi_i\varphi\|^2_{H^{s+1/2}_{t+1/2}(T{\S}^{n-1})}
	+\|\psi\|^2_{H^{s+3/2}_{t+3/2}(T{\S}^{n-1})}.
	\label{4.38}
	\end{equation}
	On using the last inequality, we derive from \eqref{4.37}
	$$
	\|(\varphi,\psi)\|^2_{{\mathcal H}^{1,s}_t(T{\S}^{n-1})}\ge a_n	\|\varphi\|^2_{H^{s+1/2}_{t+1/2}(T{\S}^{n-1})}.
	$$
	The right-hand side of this inequality is non-negative and equals zero only if $\varphi=0$. In the latter case the right-hand side of
	\eqref{4.37} is non-negative and equals zero only if $\psi=0$.
	
	We define the Hilbert space ${\mathcal H}^{1,s}_t(T{\S}^{n-1})$ as the completion of ${\mathcal S}(T{\S}^{n-1})\oplus{\mathcal S}(T{\S}^{n-1})$ with respect to the norm \eqref{4.37}.
	
	\begin{corollary} \label{Cor4.1}
		The map
		$$
		\Sc(\Rb^{n};{\C}^n)\rightarrow{\mathcal S}(T{\S}^{n-1})\oplus{\mathcal S}(T{\S}^{n-1}),\quad f\mapsto(I^0\!f,I^1\!f)
		$$
		extends to the linear isometric embedding
		\begin{equation}
		H^s_t({\R}^n;{\C}^n)\rightarrow{\mathcal H}^{1,s}_t(T{\S}^{n-1})
		\label{4.39}
		\end{equation}
		for every $s\in\R$ and every $t>-n/2$. In other words, the equality
		\begin{equation}
		\|f\|_{H^s_t({\R}^n;{\C}^n)}=\|(I^0\!f,I^1\!f)\|_{{\mathcal H}^{1,s}_t(T{\S}^{n-1})}
		\label{4.40}
		\end{equation}
		holds for all $f\in H^s_t({\R}^n;{\C}^n)$.
	\end{corollary}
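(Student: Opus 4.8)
The plan is to read off Corollary \ref{Cor4.1} directly from Theorem \ref{Rf1}. First, for a Schwartz vector field $f\in\Sc(\R^n;\C^n)$, I would compare the right-hand side of the Reshetnyak formula \eqref{4.32} with the definition \eqref{4.37} of $\|\cdot\|_{{\mathcal H}^{1,s}_t(T{\S}^{n-1})}$ evaluated at the pair $(\varphi,\psi)=(I^0\!f,I^1\!f)$: the two expressions coincide term by term, so \eqref{4.32} is precisely the identity $\|f\|^2_{H^s_t(\R^n;\C^n)}=\|(I^0\!f,I^1\!f)\|^2_{{\mathcal H}^{1,s}_t(T{\S}^{n-1})}$, i.e. \eqref{4.40} for $f$ Schwartz. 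Thus the linear map $f\mapsto(I^0\!f,I^1\!f)$, regarded as a map from the dense subspace $\Sc(\R^n;\C^n)\subset H^s_t(\R^n;\C^n)$ into the Hilbert space ${\mathcal H}^{1,s}_t(T{\S}^{n-1})$, is norm-preserving; in particular it is bounded and injective.

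The second and final step is the standard extension-by-continuity argument. By \eqref{2.10}, $H^s_t(\R^n;\C^n)$ is the completion of $\Sc(\R^n;\C^n)$ with respect to $\|\cdot\|_{H^s_t}$, and ${\mathcal H}^{1,s}_t(T{\S}^{n-1})$ is complete by its very definition as a completion. Hence the isometry on $\Sc(\R^n;\C^n)$ extends uniquely to a bounded linear map \eqref{4.39}: given $f\in H^s_t(\R^n;\C^n)$, choose $f_j\in\Sc(\R^n;\C^n)$ with $f_j\to f$ in $H^s_t$; then $\{(I^0\!f_j,I^1\!f_j)\}$ is Cauchy in ${\mathcal H}^{1,s}_t(T{\S}^{n-1})$, because the isometry identity applied to $f_j-f_k$ gives $\|(I^0\!f_j,I^1\!f_j)-(I^0\!f_k,I^1\!f_k)\|_{{\mathcal H}^{1,s}_t}=\|f_j-f_k\|_{H^s_t}$, so it converges to a limit which is independent of the choice of $\{f_j\}$ by the same identity; we declare this limit to be the image of $f$. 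Passing to the limit in $\|f_j\|_{H^s_t}=\|(I^0\!f_j,I^1\!f_j)\|_{{\mathcal H}^{1,s}_t}$ yields \eqref{4.40} for all $f\in H^s_t(\R^n;\C^n)$, i.e. the extended map is an isometry, hence an isometric embedding.

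I do not expect any genuine obstacle: all the analytic work is already contained in Theorem \ref{Rf1} (and, behind it, in Theorem \ref{Th8.1}, Lemma \ref{L8.2}, and the scalar Reshetnyak formula \eqref{2.12}), while what remains is the routine fact that an isometry from a dense subspace of a normed space into a complete space extends uniquely to an isometry of the completion. The only point one should not skip is the verification that ${\mathcal H}^{1,s}_t(T{\S}^{n-1})$ is genuinely a normed space, so that its completion makes sense and \eqref{4.40} is a meaningful equality of norms; but this is exactly the positivity of the right-hand side of \eqref{4.37} for $(\varphi,\psi)\neq(0,0)$, which was established just before the statement using the Schwartz-type inequality \eqref{4.37a} together with the bound \eqref{4.38}.
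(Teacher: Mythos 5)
Your proposal is correct and follows exactly the route the paper intends: Corollary \ref{Cor4.1} is read off from Theorem \ref{Rf1} by identifying the right-hand side of \eqref{4.32} with the norm \eqref{4.37} at $(\varphi,\psi)=(I^0\!f,I^1\!f)$, using the already-established positivity of \eqref{4.37}, and then extending the isometry from the dense subspace $\Sc(\Rb^n;\C^n)$ by continuity. Nothing essential differs from the paper's treatment.
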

	
	The embedding \eqref{4.39} is not surjective because the function $(I^0\!f)(x,\xi)$ is odd in the second argument and the function $(I^1\!f)(x,\xi)$ is even in the second argument as is seen from \eqref{2.3}. The interesting open question is about a description of the range  of the operator \eqref{4.39}.
	
	Formula \eqref{4.40} gives the best stability estimate for the problem of recovering a vector field $f$ from the data $(I^0\!f,I^1\!f)$, although a rather exotic norm is involved. The stability estimate involving more traditional norms can be also easily derived from the Reshetnyak formula.
	
	\begin{corollary} \label{Cor4.2}
		For every $s\in\R$ and for every $t>-n/2$, the stability estimate
		\begin{equation}
		\|f\|^2_{H^s_t({\R}^n;{\C}^n)}\le 2a_n\Big(
		\sum\limits_{i=1}^n\|\Xi_i(I^0\!f)\|^2_{H^{s+1/2}_{t+1/2}}
		+\|I^0\!f\|^2_{H^{s+1/2}_{t+1/2}}
		+\|I^1\!f\|^2_{H^{s+3/2}_{t+3/2}}\Big).
		\label{4.41}
		\end{equation}
		holds for all $f\in H^s_t({\R}^n;{\C}^n)$. 
	\end{corollary}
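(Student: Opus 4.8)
The plan is to read the estimate off directly from the Reshetnyak identity \eqref{4.32}, the only work being to absorb the one indefinite term. First I would apply Theorem~\ref{Rf1}: for $f\in\Sc(\R^n;\C^n)$ the identity \eqref{4.32} writes $\|f\|^2_{H^s_t(\R^n;\C^n)}$ as $a_n$ times a sum of three manifestly non-negative quantities plus the single term $2\Re\big(\I\,Z(I^0\!f),I^1\!f\big)_{H^{s+1}_{t+1}(T\S^{n-1})}$, which is the only one not obviously $\ge 0$.

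Next I would control that cross term with the elementary inequality \eqref{4.38} (established right after \eqref{4.37} from Cauchy--Schwarz and Young's inequality, using the definition \eqref{4.33} of $Z$), specialized to $\varphi=I^0\!f$ and $\psi=I^1\!f$:
\[
2\Re\big(\I\,Z(I^0\!f),I^1\!f\big)_{H^{s+1}_{t+1}}\;\le\;\sum_{i=1}^n\|\Xi_i(I^0\!f)\|^2_{H^{s+1/2}_{t+1/2}}+\|I^1\!f\|^2_{H^{s+3/2}_{t+3/2}}.
\]
Inserting this into \eqref{4.32} and collecting like terms gives
\[
\|f\|^2_{H^s_t(\R^n;\C^n)}\le a_n\Big(2\sum_{i=1}^n\|\Xi_i(I^0\!f)\|^2_{H^{s+1/2}_{t+1/2}}+\|I^0\!f\|^2_{H^{s+1/2}_{t+1/2}}+2\|I^1\!f\|^2_{H^{s+3/2}_{t+3/2}}\Big),
\]
and the trivial bound $\|I^0\!f\|^2\le 2\|I^0\!f\|^2$ then yields exactly \eqref{4.41} for Schwartz fields.

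Finally, to get \eqref{4.41} for arbitrary $f\in H^s_t(\R^n;\C^n)$ rather than only Schwartz fields, I would invoke Corollary~\ref{Cor4.1}: by \eqref{4.40} the map $f\mapsto(I^0\!f,I^1\!f)$ is a linear isometry onto its image in $\mathcal H^{1,s}_t(T\S^{n-1})$, and both sides of \eqref{4.41} are continuous in the relevant norms, so the inequality extends to the completion by density. I do not anticipate a genuine obstacle here; the corollary is an immediate consequence of the already-proven Reshetnyak formula \eqref{4.32} together with \eqref{4.38}. The one point worth flagging is that the factor $2$ in \eqref{4.41} arises from two successive crude steps — bounding the cross term by \eqref{4.38}, then replacing $\|I^0\!f\|^2+2\|I^1\!f\|^2$ by $2\big(\|I^0\!f\|^2+\|I^1\!f\|^2\big)$ — so \eqref{4.41} should be viewed as a convenient rather than optimal estimate, the sharp statement being the isometry \eqref{4.40}.
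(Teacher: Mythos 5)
Your proposal is correct and follows exactly the paper's own route: the paper derives \eqref{4.41} from the Reshetnyak identity \eqref{4.32} combined with the bound \eqref{4.38} on the cross term, precisely as you do. Your additional remarks on the density argument and on the non-optimality of the factor $2$ are accurate but do not change the substance of the argument.
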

	
	The abbreviated notation $\|\cdot\|_{H^{s'}_{t'}}$ for the norm $\|\cdot\|_{H^{s'}_{t'}(T{\S}^{n-1})}$ is used on the right-hand side of \eqref{4.41}.
	Inequality \eqref{4.41} follows from \eqref{4.32} with the help of \eqref{4.38}.

	\subsection{Reshetnyak formula for second rank symmetric tensor fields}
	
	Along with the operator $Z$ defined by \eqref{4.33}, we need two similar operators
	$$
	Q,W:{\mathcal S}(T{\S}^{n-1})\rightarrow C^\infty(T{\S}^{n-1})
	$$
	that are defined by the formulas
	$$
	(y^iy^j\Xi_i\Xi_j\hat\varphi)(y,\xi)=|y|^2\widehat{Q\varphi}(y,\xi)
	$$
	and
	$$
	(y^iX_i\hat\varphi)(y,\xi)=|y|\widehat{W\varphi}(y,\xi)
	$$
	respectively. We will sometimes write $\Xi^i$ instead of $\Xi_i$ in order to adopt our formulas to the Einstein summation rule.

	\begin{theorem} \label{Rf2}
		Given a tensor field $f\in \Sc(\Rb^{n};S^{2}\Rb^{n})$, the equality
		\begin{equation}
		\begin{aligned}
		\|f\|^2_{H^s_t({\R}^n;S^{2}\Rb^{n})}=\frac{a_n}{4}\Big[&\|I^2\!f\|^2_{H^{s+5/2}_{t+5/2}}
		+4\Im\big(I^2\!f,Z(I^1\!f)\big)_{H^{s+2}_{t+2}}
		+2\sum\limits_i\|\Xi_i(I^1\!f)\|^2_{H^{s+3/2}_{t+3/2}}\\
		&+2\|I^1\!f\|^2_{H^{s+3/2}_{t+3/2}}
		+2\|Z(I^1\!f)\|^2_{H^{s+3/2}_{t+3/2}}
		+2\Re\big(I^2\!f,W(I^0\!f)\big)_{H^{s+2}_{t+2}}\\
		&-4\Re\big(I^2\!f,I^0\!f\big)_{H^{s+3/2}_{t+3/2}}
		-2\Re\big(I^2\!f,Q(I^0\!f)\big)_{H^{s+3/2}_{t+3/2}}\\
		&-4\Re\big(\Xi^i(I^1\!f),x_iI^0\!f\big)_{H^{s+3/2}_{t+3/2}}
		+8\Im\big(Z(I^1\!f),I^0\!f\big)_{H^{s+1}_{t+1}}\\
		&+4\Im\big(I^1\!f,Z(I^0\!f)\big)_{H^{s+1}_{t+1}}
		+4\Im\big(\Xi^i(I^1\!f),Z\Xi_i(I^0\!f)\big)_{H^{s+1}_{t+1}}\\
		&+\sum\limits_i\|x_iI^0\!f\|^2_{H^{s+3/2}_{t+3/2}}
		-2\Im\big(x^iI^0\!f,Z\Xi_i(I^0\!f)\big)_{H^{s+1}_{t+1}}\\
		&-4\Re\big(W(I^0\!f),I^0\!f\big)_{H^{s+1}_{t+1}}
		+\sum\limits_{i,j}\|\Xi_i\Xi_j(I^0\!f)\|^2_{H^{s+1/2}_{t+1/2}}\\
		&+4\Re\big(\Xi^i\Xi_i(I^0\!f), I^0\!f\big)_{H^{s+1/2}_{t+1/2}}
		+\sum\limits_i\|\Xi_i(I^0\!f)\|^2_{H^{s+1/2}_{t+1/2}}
		+4n\|I^0\!f\|^2_{H^{s+1/2}_{t+1/2}}\Big]
		\end{aligned}
		\label{4.42}
		\end{equation}
		holds for every $s\in\R$ and every $t>-n/2$, where the constant $a_n$ is defined by \eqref{2.13}. 
	\end{theorem}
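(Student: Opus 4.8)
The plan is to mirror, step for step, the proof of Theorem~\ref{Rf1}, now starting from the second-rank identity \eqref{4.14} instead of the first-rank identity \eqref{4.13}. First I would apply the Fourier transform to \eqref{4.14} and commute it through every operator using Lemma~\ref{L8.2}: each $X_i$ becomes multiplication by $\textsl{i}\,y_i$, each $\Xi_i$ is preserved, and each $x_i$ becomes the operator $\textsl{i}\,X_i$. This yields an explicit expression for $\widehat{If_{ij}}(y,\xi)$ on $T{\S}^{n-1}$,
\begin{multline*}
\widehat{If_{ij}}=\tfrac12\sigma(ij)\Big[-y_iy_j\,\widehat{I^2\!f}-2\textsl{i}\,\Xi_i\big(y_j\widehat{I^1\!f}\big)-4\textsl{i}\,\xi_iy_j\,\widehat{I^1\!f}+\Xi_i\Xi_j\widehat{I^0\!f}\\
-X_i\big(y_j\widehat{I^0\!f}\big)+3\xi_i\Xi_j\widehat{I^0\!f}+3\delta_{ij}\widehat{I^0\!f}-\xi_i\xi_j\widehat{I^0\!f}\Big],
\end{multline*}
where one must remember that $\Xi_i$ and $X_i$ still differentiate the Fourier variable, $\Xi_iy_j=-y_i\xi_j$ and $X_iy_j=\delta_{ij}-\xi_i\xi_j$, so that the Leibniz rule splits the $\Xi_i(y_j\cdot)$ and $X_i(y_j\cdot)$ terms into several pieces. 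Next, exactly as in the vector case, I would write $\|f\|^2_{H^s_t(\R^n;S^2\R^n)}=\sum\limits_{i,j=1}^n\|f_{ij}\|^2_{H^s_t(\R^n)}$, apply the scalar Reshetnyak formula \eqref{2.12} to each component, and unfold the definition \eqref{2.11}; the statement then reduces to computing $\sum\limits_{i,j}|\widehat{If_{ij}}(y,\xi)|^2$ pointwise on $T{\S}^{n-1}$ and integrating it against $|y|^{2t+1}(1+|y|^2)^{s-t}$.

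The heart of the proof is this pointwise computation. Squaring $\widehat{If_{ij}}$ and summing over $i,j$, one first notes that since $\sigma(ij)\widehat{If_{ij}}=\widehat{If_{ij}}$ is already symmetric, one of the two symmetrizations may be dropped in each product, via the elementary identity $\sum_{i,j}|\sigma(ij)B_{ij}|^2=\tfrac12\sum_{i,j}|B_{ij}|^2+\tfrac12\Re\sum_{i,j}B_{ij}\overline{B_{ji}}$; the prefactor $\tfrac{a_n}{4}$ in \eqref{4.42} is exactly the $\big(\tfrac12\big)^2$ coming from $\big(\tfrac12\sigma(ij)\big)^2$. One then expands, uses the orthogonality relations $\langle\xi,y\rangle=0$, $|\xi|^2=1$, $\xi^iX_i=\xi^i\Xi_i=0$ from Lemma~\ref{L8.1} to annihilate the many vanishing cross-terms, and reorganizes the survivors with the commutators \eqref{4.4}--\eqref{4.6}. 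By their definitions, the combinations $y^iy^j\Xi_i\Xi_j\hat\varphi$, $y^iX_i\hat\varphi$, $y^i\Xi_i\hat\varphi$ that arise equal $|y|^2\widehat{Q\varphi}$, $|y|\widehat{W\varphi}$, $|y|\widehat{Z\varphi}$; pulling out these powers of $|y|$, each surviving monomial acquires an overall factor $|y|^{2\kappa}$ with $\kappa\in\{0,\tfrac12,1,\tfrac32,2\}$ and, after the weight $|y|^{2t+1}(1+|y|^2)^{s-t}$ is reabsorbed, becomes a squared $H^{s+1/2+\kappa}_{t+1/2+\kappa}(T{\S}^{n-1})$-norm or inner product — this is where the shifts $\tfrac12,1,\tfrac32,2,\tfrac52$ in \eqref{4.42} come from. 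Integrating over $\xi^\perp$ and $\S^{n-1}$ and using $\Xi_i\widehat{I^0\!f}=\widehat{\Xi_i(I^0\!f)}$ from Lemma~\ref{L8.2}, one then reads off \eqref{4.42} term by term. Two quick sanity checks: the $\delta_{ij}$-part collapses under $\sigma(ij)$ and $\sum_{i,j}$ to $\delta_{ii}=n$, reproducing the $4n\|I^0\!f\|^2$ term, and the pure $I^2\!f$ part gives $\tfrac14|y|^4|\widehat{I^2\!f}|^2$, i.e.\ the coefficient-$1$ term $\|I^2\!f\|^2_{H^{s+5/2}_{t+5/2}}$ inside the bracket.

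The only genuine obstacle is bookkeeping: the expansion produces on the order of twenty surviving bilinear terms, and one must track each sign, each power of $\textsl{i}$ (which decides $\Re$ versus $\Im$), and each Sobolev index correctly. Two places deserve particular care — the Leibniz terms $\Xi_iy_j=-y_i\xi_j$ and $X_iy_j=\delta_{ij}-\xi_i\xi_j$, which split $\Xi_i(y_j\cdot)$ and $X_i(y_j\cdot)$ into several contributions (among them all the $x_iI^0\!f$-terms and the shift of the $\delta_{ij}$-coefficient that feeds $-4\Re(I^2\!f,I^0\!f)_{H^{s+3/2}_{t+3/2}}$), and the non-commutativity $\Xi_i\Xi_j\neq\Xi_j\Xi_i$, which must be handled with \eqref{4.5} when the symmetrizations interact and is what produces the $\Xi^i\Xi_i$-type terms. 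Conceptually, however, every individual manipulation is one already carried out in the proof of Theorem~\ref{Rf1}.
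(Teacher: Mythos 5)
Your proposal follows essentially the same route as the paper's own (sketched) proof: Fourier-transform the identity \eqref{4.14}, commute via Lemma \ref{L8.2} (including the Leibniz corrections $\Xi_i y_j=-y_i\xi_j$, $X_i y_j=\delta_{ij}-\xi_i\xi_j$, which reproduce the paper's simplified coefficient $2\delta_{ij}$), reduce to the scalar Reshetnyak formula \eqref{2.12} componentwise, expand $\sum_{i,j}|\widehat{If_{ij}}|^2$ using $|\xi|^2=1$, $\langle\xi,y\rangle=0$, \eqref{4.7} and the commutators of Lemma \ref{L8.1}, and absorb the powers of $|y|$ into the operators $Z,Q,W$ and the shifted $H^{s'}_{t'}$-norms. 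This matches the paper's argument, and your verified key identities and sanity checks are at the same level of detail as the paper's ``sketch of the proof.''
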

	
	As before, the abbreviated notation $\|\cdot\|_{H^{s'}_{t'}}$ for the norm $\|\cdot\|_{H^{s'}_{t'}(T{\S}^{n-1})}$ is used on the right-hand side of \eqref{4.42}
	
	\begin{proof}[Sketch of the proof]
		The proof follows the same line as the proof of Theorem \ref{Rf1} although all calculations are more cumbersome. We will present key formulas only.
		
		The Reshetnyak formula \eqref{2.12} for scalar functions implies
		\begin{equation}
		\|f\|^2_{H^s_t({\R}^n;S^{2}\Rb^{n})}=a_n\sum\limits_{i,j}\|If_{ij}\|^2_{H^{s+1/2}_{t+1/2}(T{\S}^{n-1})}
		=\frac{a_n}{2\pi}\sum\limits_{i,j}\int\limits_{{\S}^{n-1}}\int\limits_{\xi^\bot}|y|^{2t+1}(1+|y|^2)^{s-t}
		|\widehat{If_{ij}}(y,\xi)|^2\,\md y\md \xi.
		\label{4.43}
		\end{equation}
		
		Applying the Fourier transform to equation \eqref{4.14} and using Lemma \ref{L8.2}, we obtain the following analog of formula \eqref{4.34}:
		$$
		\widehat{If_{ij}}=\frac{1}{2}\sigma(ij)\Big( -y_i y_j\widehat{I^2\!f}
		-2\textsl{i}\,y_i\Xi_j\widehat{I^1\!f}
		-2\textsl{i}\,\xi_iy_j \widehat{I^1\!f}
		+\Xi_i \Xi_j\widehat{I^0\!f}
		-y_iX_j\widehat{I^0\!f}
		+3\xi_i \Xi_j\widehat{I^0\!f}
		+2\delta_{ij}\widehat{I^0\!f}\Big).
		$$
		This implies
		$$
		\begin{aligned}
		|\sum\limits_{i,j}\widehat{If_{ij}}(y,\xi)|^2=\frac{1}{4}&\sum\limits_{i,j=1}^n\Big|\sigma(ij)\Big( -y_i y_j\widehat{I^2\!f}(y,\xi)
		-2\textsl{i}\,y_i(\Xi_j\widehat{I^1\!f})(y,\xi)
		-2\textsl{i}\,\xi_iy_j \widehat{I^1\!f}(y,\xi)\\
		&+(\Xi_i \Xi_j\widehat{I^0\!f})(y,\xi)
		-y_i(X_j\widehat{I^0\!f})(y,\xi)
		+3\xi_i (\Xi_j\widehat{I^0\!f})(y,\xi)
		+2\delta_{ij}\widehat{I^0\!f}(y,\xi)\Big)\Big|^2.
		\end{aligned}
		$$
		Substituting this expression into \eqref{4.43}, we get,
		\begin{equation}
		\begin{aligned}
		&\|f\|^2_{H^s_t({\R}^n;S^{2}\Rb^{n})}=
		\frac{a_n}{4}\frac{1}{2\pi}\int\limits_{{\S}^{n-1}}\int\limits_{\xi^\bot}|y|^{2t+1}(1+|y|^2)^{s-t}\\
		&\times\sum\limits_{i,j=1}^n\Big|\sigma(ij)\Big( -y_i y_j\widehat{I^2\!f}(y,\xi)
		-2\textsl{i}\,y_i(\Xi_j\widehat{I^1\!f})(y,\xi)
		-2\textsl{i}\,\xi_iy_j \widehat{I^1\!f}(y,\xi)\\
		&+(\Xi_i \Xi_j\widehat{I^0\!f})(y,\xi)
		-y_i(X_j\widehat{I^0\!f})(y,\xi)
		+3\xi_i (\Xi_j\widehat{I^0\!f})(y,\xi)
		+2\delta_{ij}\widehat{I^0\!f}(y,\xi)\Big)\Big|^2
		\,\md y\md \xi.
		\end{aligned}
		\label{4.44}
		\end{equation}
		
		By pure algebraic calculations, on using the identities $|\xi|^2=1$ and $\l\xi,y\r=0$, combined with Lemma \ref{4.1}, we obtain the following analog of \eqref{4.36}:
		$$
		\begin{aligned}
		\sum\limits_{i,j=1}^n\Big|&\sigma(ij)\Big( -y_i y_j\widehat{I^2\!f}(y,\xi)
		-2\textsl{i}\,y_i(\Xi_j\widehat{I^1\!f})(y,\xi)
		-2\textsl{i}\,\xi_iy_j \widehat{I^1\!f}(y,\xi)\\
		&+(\Xi_i \Xi_j\widehat{I^0\!f})(y,\xi)
		-y_i(X_j\widehat{I^0\!f})(y,\xi)
		+3\xi_i (\Xi_j\widehat{I^0\!f})(y,\xi)
		+2\delta_{ij}\widehat{I^0\!f}(y,\xi)\Big)\Big|^2\\
		=&\ |y|^4|\widehat{I^2\!f}|^2
		+2|y|^2\sum\limits_i|\Xi_i\widehat{I^1\!f}|^2
		+2|y|^2 |\widehat{I^1\!f}|^2+2|y^i\Xi_i\widehat{I^1\!f}|^2\\
		&+\sum\limits_{i,j}|\Xi_i \Xi_j\widehat{I^0\!f}|^2
		+|y|^2\sum\limits_i|X_i\widehat{I^0\!f}|^2
		+\sum\limits_i|\Xi_i\widehat{I^0\!f}|^2
		+4n|\widehat{I^0\!f}|^2\\
		&+4|y|^2\Im(\widehat{I^2\!f}\cdot y^i\Xi_i\overline{\widehat{I^1\!f}})
		-2\Re(\widehat{I^2\!f}\cdot y^iy^j\Xi_i\Xi_j\overline{\widehat{I^0\!f}})\\
		&+2|y|^2\Re(\widehat{I^2\!f}\cdot y^iX_i\overline{\widehat{I^0\!f}})
		-4|y|^2\Re(\widehat{I^2\!f}\cdot \overline{\widehat{I^0\!f}})\\
		&+4\Im(\Xi^i\widehat{I^1\!f}\cdot y^j\Xi_j\Xi_i\overline{\widehat{I^0\!f}})
		-4|y|^2\Im(\Xi^i\widehat{I^1\!f}\cdot X_i\overline{\widehat{I^0\!f}})\\
		&+8\Im(y^i\Xi_i\widehat{I^1\!f}\cdot \overline{\widehat{I^0\!f}})
		+4\Im(\widehat{I^1\!f}\cdot y^i\Xi_i\overline{\widehat{I^0\!f}})\\
		&-2\Re(X^i\widehat{I^0\!f}\cdot y^j\Xi_j\Xi_i\overline{\widehat{I^0\!f}})
		+4\Re(\Xi^i\Xi_i\widehat{I^0\!f}\cdot \overline{\widehat{I^0\!f}})
		-4\Re(y^iX_i\widehat{I^0\!f}\cdot \overline{\widehat{I^0\!f}}).
		\end{aligned}
		$$
		Substitute this value into \eqref{4.44}
		\begin{equation}
		\begin{aligned}
		\frac{4}{a_n}\|f\|^2_{H^s_t({\R}^n;S^{2}\Rb^{n})}&= \frac{1}{2\pi}\int\limits_{{\S}^{n-1}}\int\limits_{\xi^\bot}|y|^{2t+1}(1+|y|^2)^{s-t}\bigg[
		|y|^4|\widehat{I^2\!f}(y,\xi)|^2
		+2|y|^2 |\widehat{I^1\!f}(y,\xi)|^2\\
		&+4n|\widehat{I^0\!f}(y,\xi)|^2
		-4|y|^2\Re\big(\widehat{I^2\!f}(y,\xi)\,\overline{\widehat{I^0\!f}(y,\xi)}\big)\\
		&+2|y|^2\sum\limits_i|(\Xi_i\widehat{I^1\!f})(y,\xi)|^2
		+\sum\limits_i|(\Xi_i\widehat{I^0\!f})(y,\xi)|^2\\
		&
		+\sum\limits_{i,j}|(\Xi_i \Xi_j\widehat{I^0\!f})(y,\xi)|^2
		+2|(y^i\Xi_i\widehat{I^1\!f})(y,\xi)|^2
		+|y|^2\sum\limits_i|(X_i\widehat{I^0\!f})(y,\xi)|^2\\
		&
		+4|y|^2\Im\big(\widehat{I^2\!f}(y,\xi)\, \overline{(y^i\Xi_i\widehat{I^1\!f})(y,\xi)}\big)
		-2\Re\big(\widehat{I^2\!f}(y,\xi)\,\overline{(y^iy^j\Xi_i\Xi_j\widehat{I^0\!f})(y,\xi)}\big)\\
		&+2|y|^2\Re\big(\widehat{I^2\!f}(y,\xi)\,\overline{(y^iX_i\widehat{I^0\!f})(y,\xi)}\big)
		+4\Im\big((\Xi^i\widehat{I^1\!f})(y,\xi)\, \overline{(y^j\Xi_j\Xi_i\widehat{I^0\!f})(y,\xi)}\big)\\
		&-4|y|^2\Im\big((\Xi^i\widehat{I^1\!f})(y,\xi)\, \overline{(X_i\widehat{I^0\!f})(y,\xi)}\big)
		+8\Im\big((y^i\Xi_i\widehat{I^1\!f})(y,\xi)\, \overline{\widehat{I^0\!f}(y,\xi)}\big)\\
		&+4\Im\big(\widehat{I^1\!f}(y,\xi)\, \overline{(y^i\Xi_i\widehat{I^0\!f})(y,\xi)}\big)
		-2\Re\big((X^i\widehat{I^0\!f})(y,\xi)\, \overline{(y^j\Xi_j\Xi_i\widehat{I^0\!f})(y,\xi)}\big)\\
		&+4\Re\big((\Xi^i\Xi_i\widehat{I^0\!f})(y,\xi)\, \overline{\widehat{I^0\!f}(y,\xi)}\big)
		-4\Re\big((y^iX_i\widehat{I^0\!f})(y,\xi)\, \overline{\widehat{I^0\!f}(y,\xi)}\big)
		\bigg]\,\md y\md \xi.
		\end{aligned}
		\label{4.45}
		\end{equation}
		
		There are 19 terms in the brackets on the right-hand side of \eqref{4.45} which are in one-to-one correspondence with terms on the right-hand side of \eqref{4.42}. On using Lemma \ref{L8.2} and operators $Z,Q,W$, one checks that the integral of each term in the brackets on the right-hand side of \eqref{4.45} (with the weight written before the brackets) is equal to the corresponding term on the right-hand side of \eqref{4.42}.
	\end{proof}
	
	Repeating arguments of Section 4.2, we introduce the following norm on the space
	${\mathcal S}(T{\S}^{n-1})\oplus{\mathcal S}(T{\S}^{n-1})\oplus{\mathcal S}(T{\S}^{n-1})$:
	\begin{equation}
	\begin{aligned}
	\|(\varphi,\psi,\chi)\|^2_{{\mathcal H}^{2,s}_t(T{\S}^{n-1})}&=\frac{a_n}{4}\bigg[\|\chi\|^2_{H^{s+5/2}_{t+5/2}}
	+4\Im(\chi,Z\psi)_{H^{s+2}_{t+2}}
	+2\sum\limits_i\|\Xi_i\psi\|^2_{H^{s+3/2}_{t+3/2}}\\
	&+2\|\psi\|^2_{H^{s+3/2}_{t+3/2}}
	+2\|Z\psi\|^2_{H^{s+3/2}_{t+3/2}}
	+2\Re(\chi,W\varphi)_{H^{s+2}_{t+2}}\\
	&-4\Re(\chi,\varphi)_{H^{s+3/2}_{t+3/2}}
	-2\Re(\chi,Q\varphi)_{H^{s+3/2}_{t+3/2}}\\
	&-4\Re(\Xi^i\psi,x_i\varphi)_{H^{s+3/2}_{t+3/2}}
	+8\Im(Z\psi,\varphi)_{H^{s+1}_{t+1}}\\
	&+4\Im(\psi,Z\varphi)_{H^{s+1}_{t+1}}
	+4\Im(\Xi^i\psi,Z\Xi_i\varphi)_{H^{s+1}_{t+1}}\\
	&+\sum\limits_i\|x_i\varphi\|^2_{H^{s+3/2}_{t+3/2}}
	-2\Im(x^i\varphi,Z\Xi_i\varphi)_{H^{s+1}_{t+1}}\\
	&-4\Re(W\varphi,\varphi)_{H^{s+1}_{t+1}}
	+\sum\limits_{i,j}\|\Xi_i\Xi_j\varphi\|^2_{H^{s+1/2}_{t+1/2}}\\
	&+4\Re\big(\Xi^i\Xi_i\varphi,\varphi)_{H^{s+1/2}_{t+1/2}}
	+\sum\limits_i\|\Xi_i\varphi\|^2_{H^{s+1/2}_{t+1/2}}
	+4n\|\varphi\|^2_{H^{s+1/2}_{t+1/2}}\bigg].
	\end{aligned}
	\label{4.46}
	\end{equation}
	
	\begin{lemma} \label{L4.8}
		The right-hand side of \eqref{4.46} is positive unless $(\varphi,\psi,\chi)=(0,0,0)$.
	\end{lemma}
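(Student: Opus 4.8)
The plan is to recognise the right-hand side of \eqref{4.46} as precisely the sum of squares that already arises in the proof of Theorem \ref{Rf2}, and then to check directly that this sum of squares can vanish only for the zero triple.

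\textbf{Step 1: a sum-of-squares representation.} First I would observe that \eqref{4.46} is obtained from the right-hand side of \eqref{4.42} by the formal substitutions $I^0\!f\mapsto\varphi$, $I^1\!f\mapsto\psi$, $I^2\!f\mapsto\chi$. In the proof of Theorem \ref{Rf2} the passage from \eqref{4.44} to \eqref{4.42} consists of two purely formal manipulations: (a) the collapse of $\sum_{i,j}|\sigma(ij)(\cdots)|^2$ into the nineteen-term bracket of \eqref{4.45}, which uses only $|\xi|^2=1$, $\l\xi,y\r=0$ and the operator relations of Lemma \ref{L8.1}; and (b) the identification of those nineteen terms (carrying the common weight written before the brackets) with the summands of \eqref{4.42}, which uses only Lemma \ref{L8.2}, the definitions of the operators $Z,Q,W$, and the definition \eqref{2.11} of the $H^{s'}_{t'}(T{\S}^{n-1})$-norm. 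None of this refers to the tensor field $f$. Consequently, writing
\begin{equation*}
A_{ij}=-y_iy_j\hat\chi-2\textsl{i}\,y_i\Xi_j\hat\psi-2\textsl{i}\,\xi_iy_j\hat\psi+\Xi_i\Xi_j\hat\varphi-y_iX_j\hat\varphi+3\xi_i\Xi_j\hat\varphi+2\delta_{ij}\hat\varphi
\end{equation*}
and defining $g_{ij}\in{\mathcal S}(T{\S}^{n-1})$ by $\widehat{g_{ij}}=\tfrac12\,\sigma(ij)A_{ij}$, one gets
\begin{equation*}
\|(\varphi,\psi,\chi)\|^2_{{\mathcal H}^{2,s}_t(T{\S}^{n-1})}=a_n\sum\limits_{i,j=1}^n\|g_{ij}\|^2_{H^{s+1/2}_{t+1/2}(T{\S}^{n-1})}\ \ge\ 0 .
\end{equation*}
Since $a_n>0$ and, in the admissible range $t>-n/2$, $\|\cdot\|_{H^{s+1/2}_{t+1/2}(T{\S}^{n-1})}$ is a genuine norm, the right-hand side vanishes if and only if $g_{ij}\equiv0$ on $T{\S}^{n-1}$ for every pair $i,j$.

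\textbf{Step 2: nondegeneracy.} It then remains to show that $g_{ij}\equiv0$ forces $\varphi=\psi=\chi=0$, which I would do by three successive contractions of $\widehat{g_{ij}}=\tfrac12\sigma(ij)A_{ij}$. Contracting with the symmetric tensor $\xi^i\xi^j$ and using $\xi^iX_i=\xi^i\Xi_i=0$, $\l\xi,y\r=0$, $|\xi|=1$ together with the identity $\xi^i\xi^j\Xi_i\Xi_j\hat\varphi=0$ (which follows from $\xi^i\Xi_i=0$ and $\Xi_i\xi^j=\delta_i^j-\xi_i\xi^j$), all terms of $A_{ij}$ are annihilated except $2\delta_{ij}\hat\varphi$, so $\xi^i\xi^j\widehat{g_{ij}}=\hat\varphi$ and hence $\varphi=0$. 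Next, setting $\varphi=0$ and contracting the remaining $\widehat{g_{ij}}$ with $y^i\xi^j$ kills every term but $-2\textsl{i}\,\xi_iy_j\hat\psi$, giving $y^i\xi^j\widehat{g_{ij}}=-\tfrac{\textsl{i}}{2}|y|^2\hat\psi$; since $\hat\psi$ is continuous on $T{\S}^{n-1}$ and $\{(y,\xi):y\ne0\}$ is dense there, $\psi=0$. Finally, with $\varphi=\psi=0$ one has $\widehat{g_{ij}}=-\tfrac12 y_iy_j\hat\chi$, and summing over $i=j$ yields $|y|^2\hat\chi\equiv0$, whence $\chi=0$.

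\textbf{Expected difficulty.} The only genuinely delicate point is Step 1: I must make sure that the bookkeeping carried out in the proof of Theorem \ref{Rf2} is truly algebraic, i.e. that neither the collapse of $\sum_{i,j}|\sigma(ij)(\cdots)|^2$ into the nineteen-term form nor the term-by-term matching ever secretly uses a property peculiar to $I^0\!f,I^1\!f,I^2\!f$, such as the homogeneities \eqref{2.3} or the parity of $I^k\!f$ in $\xi$. Granting this, Step 2 consists of two short contraction computations. (The same argument simultaneously reproves that $f\mapsto(I^0\!f,I^1\!f,I^2\!f)$ is injective on $H^s_t({\R}^n;S^2{\R}^n)$, since by Theorem \ref{Rf2} the left-hand side of \eqref{4.42} equals $\|(I^0\!f,I^1\!f,I^2\!f)\|^2_{{\mathcal H}^{2,s}_t(T{\S}^{n-1})}$.)
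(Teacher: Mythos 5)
Your Step 1 is exactly the paper's own proof of Lemma \ref{L4.8}: since the passage from \eqref{4.44} to \eqref{4.42} uses no property peculiar to $(I^0\!f,I^1\!f,I^2\!f)$, the right-hand side of \eqref{4.46} equals the manifestly nonnegative integral $A[\varphi,\psi,\chi]$ of \eqref{4.47}. Your Step 2 (contracting with $\xi^i\xi^j$, then with $y^i\xi^j$, then taking the trace, using $\xi^iX_i=\xi^i\Xi_i=0$, $|\xi|=1$ and $\l\xi,y\r=0$) is correct and in fact supplies the strict-positivity detail that the paper only asserts, so your proposal is complete and follows the same route.
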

	
	This statement was actually proved before. Indeed, the integral
	\begin{equation}
	\begin{aligned}
	&A[\varphi,\psi,\chi]=
	\frac{a_n}{4}\frac{1}{2\pi}\int\limits_{{\S}^{n-1}}\int\limits_{\xi^\bot}|y|^{2t+1}(1+|y|^2)^{s-t}
	\sum\limits_{i,j=1}^n\Big|\sigma(ij)\Big( -y_i y_j\widehat\chi(y,\xi)
	-2\textsl{i}\,y_i(\Xi_j\widehat\psi)(y,\xi)\\
	&-2\textsl{i}\,\xi_iy_j \widehat\psi(y,\xi)
	+(\Xi_i \Xi_j\widehat\varphi)(y,\xi)
	-y_i(X_j\widehat\varphi)(y,\xi)
	+3\xi_i (\Xi_j\widehat\varphi)(y,\xi)
	+2\delta_{ij}\widehat\varphi(y,\xi)\Big)\Big|^2
	\,\md y\md \xi
	\end{aligned}
	\label{4.47}
	\end{equation}
	is positive unless $\varphi=\psi=\chi=0$. The right-hand side of \eqref{4.44} is equal to $A[I^0\!f,I^1\!f,I^2\!f]$.
	After formula \eqref{4.44}, the proof of Theorem \ref{Rf2} consists of some transformations of the right-hand side of \eqref{4.44}. No specific property of the functions $(I^0\!f,I^1\!f,I^2\!f)$ is used in the transformations, i.e., the right-hand side of \eqref{4.47} can be transformed in the same manner. In this way, we demonstrate that $A[\varphi,\psi,\chi]$ is equal to the right-hand side of \eqref{4.46}.
	
	We define the Hilbert space ${\mathcal H}^{2,s}_t(T{\S}^{n-1})$ as the completion of
	${\mathcal S}(T{\S}^{n-1})\oplus{\mathcal S}(T{\S}^{n-1})\oplus{\mathcal S}(T{\S}^{n-1})$ with respect to the norm \eqref{4.46}.
	
	\begin{corollary} \label{Cor4.3}
		The map
		$$
		\Sc(\Rb^{n};S^2{\R}^n)\rightarrow{\mathcal S}(T{\S}^{n-1})\oplus{\mathcal S}(T{\S}^{n-1})\oplus{\mathcal S}(T{\S}^{n-1}),\quad f\mapsto(I^0\!f,I^1\!f,I^2\!f)
		$$
		extends to the linear isometric embedding
		$$
		H^s_t({\R}^n;{\C}^n)\rightarrow{\mathcal H}^{2,s}_t(T{\S}^{n-1})
		$$
		for every $s\in\R$ and every $t>-n/2$. In other words, the equality
		$$
		\|f\|_{H^s_t({\R}^n;S^2{\R}^n)}=\|(I^0\!f,I^1\!f,I^2\!f)\|_{{\mathcal H}^{2,s}_t(T{\S}^{n-1})}
		$$
		holds for all $f\in H^s_t({\R}^n;S^2{\R}^n)$.
	\end{corollary}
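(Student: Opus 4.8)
The plan is to obtain Corollary \ref{Cor4.3} as an immediate consequence of Theorem \ref{Rf2} together with the definition \eqref{4.46} of the norm on ${\mathcal H}^{2,s}_t(T{\S}^{n-1})$, running the same argument that establishes Corollary \ref{Cor4.1} in the vector case. First I would observe that the norm \eqref{4.46} was set up precisely so that, upon substituting $\varphi=I^0\!f$, $\psi=I^1\!f$, $\chi=I^2\!f$, its right-hand side reproduces term by term the right-hand side of the Reshetnyak formula \eqref{4.42}. Consequently Theorem \ref{Rf2} reads exactly as
$$
\|f\|_{H^s_t({\R}^n;S^2{\R}^n)}=\|(I^0\!f,I^1\!f,I^2\!f)\|_{{\mathcal H}^{2,s}_t(T{\S}^{n-1})}
$$
for every $f\in{\mathcal S}({\R}^n;S^2{\R}^n)$. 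By Lemma \ref{L4.8} the right-hand side is a genuine norm, so this identity says that the linear map $f\mapsto(I^0\!f,I^1\!f,I^2\!f)$ is an isometry of $\bigl({\mathcal S}({\R}^n;S^2{\R}^n),\ \|\cdot\|_{H^s_t}\bigr)$ into the normed space ${\mathcal H}^{2,s}_t(T{\S}^{n-1})$.

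Second I would carry out the standard density/completion argument. By definition \eqref{2.10} applied componentwise, $H^s_t({\R}^n;S^2{\R}^n)$ is the completion of ${\mathcal S}({\R}^n;S^2{\R}^n)$ in the norm $\|\cdot\|_{H^s_t}$, and ${\mathcal H}^{2,s}_t(T{\S}^{n-1})$ is complete by construction. A linear isometry defined on a dense subspace of a normed space sends Cauchy sequences to Cauchy sequences and respects limits, hence it extends uniquely to a bounded linear map on the whole completion; by continuity the extension still preserves norms, so it is again an isometry, in particular injective, i.e.\ a linear isometric embedding. (This is also what \emph{defines} $I^0,I^1,I^2$ on non-Schwartz elements of $H^s_t$: their images are the limits in ${\mathcal H}^{2,s}_t(T{\S}^{n-1})$ of the Schwartz approximants.) Evaluating the displayed norm identity on an arbitrary $f\in H^s_t({\R}^n;S^2{\R}^n)$ then yields the asserted equality, which is the content of the corollary.

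I do not expect a genuine obstacle here: the entire analytic content sits in Theorem \ref{Rf2}, whose proof was sketched above, and in the positivity supplied by Lemma \ref{L4.8}. The one point that must be checked with care is exactly that positivity of the quadratic form \eqref{4.46}: without it one would only have a bounded map into a space carrying a semi-norm, and the phrase ``isometric embedding into the Hilbert space ${\mathcal H}^{2,s}_t$'' would lose its meaning. Once positive definiteness is in hand, the extension-by-density step is purely formal. It is also worth recording in passing that the restriction $t>-n/2$ is inherited from the scalar Reshetnyak formula \eqref{2.12} invoked inside Theorem \ref{Rf2}, and that — just as for the map \eqref{4.39} in the vector case — the embedding is not surjective for parity reasons, so ``embedding'' rather than ``isomorphism'' is the sharp statement.
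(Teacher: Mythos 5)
Your argument is correct and follows the same route the paper intends: the corollary is an immediate consequence of Theorem \ref{Rf2} together with the definition \eqref{4.46} of the ${\mathcal H}^{2,s}_t$-norm, the positivity supplied by Lemma \ref{L4.8}, and the standard extension-by-density to the completions, exactly as in Corollary \ref{Cor4.1}. Nothing further is needed.
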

	
	Estimating scalar products on the right-hand side of \eqref{4.42} with the help of \eqref{4.37a}, we also obtain
	
	\begin{corollary} \label{Cor4.4}
		For every $s\in\R$ and for every $t>-n/2$, the stability estimate
		\begin{equation}
		\begin{aligned}
		\|f&\|^2_{H^s_t({\R}^n;S^2{\R}^n)}\leq b_n\bigg[\|I^2\!f\|^2_{H^{s+5/2}_{t+5/2}}
		+\sum\limits_i\|\Xi_i(I^1\!f)\|^2_{H^{s+3/2}_{t+3/2}}
		+\|I^1\!f\|^2_{H^{s+3/2}_{t+3/2}}
		+\|Z(I^1\!f)\|^2_{H^{s+3/2}_{t+3/2}}\\
		&+\sum\limits_i\|x_iI^0\!f\|^2_{H^{s+3/2}_{t+3/2}}
		+\sum\limits_{i,j}\|\Xi_i \Xi_j(I^0\!f)\|^2_{H^{s+1/2}_{t+1/2}}
		+ \lVert I^0\!f \rVert^2_{H^{s+1/2}_{t+1/2}}
		+\sum_{i=1}^{n}\|\Xi_i (I^0\!f)\|^2_{H^{s+1/2}_{t+1/2}}\bigg]
		\end{aligned}
		\label{4.48}
		\end{equation}
		holds for all $f\in H^s_t({\R}^n;S^2{\R}^n)$ with some constant $b_n$ depending on $n$ only.
	\end{corollary}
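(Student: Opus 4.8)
The plan is to start from the exact Reshetnyak identity \eqref{4.42} of Theorem~\ref{Rf2} and bound away its cross terms in a controlled way. The right-hand side of \eqref{4.42} is a sum of nineteen terms: eight of them are squared $H^{s'}_{t'}$-norms, and the remaining eleven are real or imaginary parts of inner products $(\cdot,\cdot)_{H^{s'}_{t'}}$. I would show that each of the eleven cross terms is dominated by a fixed multiple of a sum of the eight ``diagonal'' terms, and that, after estimating the operators $Z$, $Q$, $W$, these eight diagonal quantities are exactly the ones appearing on the right-hand side of \eqref{4.48}. Since \eqref{4.42} is an equality, this yields \eqref{4.48} at once, with $b_n$ an explicit constant assembled from $a_n$, from the numerical coefficients in \eqref{4.42} (one of which is $4n$, which is why $b_n$ may depend on $n$), and from absolute constants.

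The basic tool is the evident generalization of \eqref{4.37a}: for every real $a$ and every $s',t'$ for which the norms below are finite, Cauchy--Schwarz inside the Fourier integral defining $(\varphi,\psi)_{H^{s'}_{t'}}$ gives
\begin{equation*}
\big|(\varphi,\psi)_{H^{s'}_{t'}}\big|\le\|\varphi\|_{H^{s'+a}_{t'+a}}\,\|\psi\|_{H^{s'-a}_{t'-a}},
\end{equation*}
because the weight $|y|^{2t'}(1+|y|^2)^{s'-t'}$ is the geometric mean of the weights for the levels $(s'+a,t'+a)$ and $(s'-a,t'-a)$. Combined with Young's inequality $2\alpha\beta\le\varepsilon\alpha^2+\varepsilon^{-1}\beta^2$, each cross term of \eqref{4.42}, say $c\,\Re(\varphi,\psi)_{H^{s'}_{t'}}$ or $c\,\Im(\varphi,\psi)_{H^{s'}_{t'}}$, is controlled by a constant times $\|\varphi\|^2_{H^{s'+a}_{t'+a}}+\|\psi\|^2_{H^{s'-a}_{t'-a}}$, where $a$ is chosen (together with the labelling of $\varphi,\psi$) so that both resulting levels lie in the list $(s+\tfrac12,t+\tfrac12)$, $(s+\tfrac32,t+\tfrac32)$, $(s+\tfrac52,t+\tfrac52)$. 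Running through the eleven cross terms, the required shift is always $a\in\{0,\pm\tfrac12,\pm1\}$, and the lowest level ever produced is $(s+\tfrac12,t+\tfrac12)$ --- admissible exactly because $t>-n/2$, which is also the range in which \eqref{4.42} holds.

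Several of these cross terms, as well as the term $4\Re(\Xi^i\Xi_i(I^0\!f),I^0\!f)_{H^{s+1/2}_{t+1/2}}$, involve the auxiliary functions $Z(I^1\!f)$, $Z(I^0\!f)$, $Q(I^0\!f)$, $W(I^0\!f)$, $Z\Xi_i(I^0\!f)$, $\Xi^i\Xi_i(I^0\!f)$, which do not appear on the right-hand side of \eqref{4.48}. Here I would use the defining identities of $Z,Q,W$, the commutation formulas of Lemma~\ref{L8.2} (in particular $X_i\hat\varphi=-\textsl{i}\,\widehat{x_i\varphi}$ and $\Xi_i\hat\varphi=\widehat{\Xi_i\varphi}$, so that $\Xi_i\Xi_j\hat\varphi=\widehat{\Xi_i\Xi_j\varphi}$), and the normalization $\sum_i(y^i/|y|)^2=1$ to obtain the pointwise-in-$(y,\xi)$ bounds $|\widehat{Z\varphi}|^2\le\sum_i|\Xi_i\hat\varphi|^2$, $|\widehat{Q\varphi}|^2\le\sum_{i,j}|\Xi_i\Xi_j\hat\varphi|^2$, $|\widehat{W\varphi}|^2\le\sum_i|\widehat{x_i\varphi}|^2$, $|\widehat{Z\Xi_i\varphi}|^2\le\sum_j|\Xi_j\Xi_i\hat\varphi|^2$, hence
\begin{equation*}
\|Z\varphi\|^2_{H^{s'}_{t'}}\le\sum_i\|\Xi_i\varphi\|^2_{H^{s'}_{t'}},\qquad
\|Q\varphi\|^2_{H^{s'}_{t'}}\le\sum_{i,j}\|\Xi_i\Xi_j\varphi\|^2_{H^{s'}_{t'}},\qquad
\|W\varphi\|^2_{H^{s'}_{t'}}\le\sum_i\|x_i\varphi\|^2_{H^{s'}_{t'}},
\end{equation*}
together with $\sum_i\|Z\Xi_i\varphi\|^2_{H^{s'}_{t'}}\le\sum_{i,j}\|\Xi_i\Xi_j\varphi\|^2_{H^{s'}_{t'}}$ and $\|\Xi^i\Xi_i\varphi\|^2_{H^{s'}_{t'}}\le n\sum_{i,j}\|\Xi_i\Xi_j\varphi\|^2_{H^{s'}_{t'}}$. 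Inserting these into the estimates of the previous paragraph replaces every $Z,Q,W$-argument by one of the eight diagonal quantities on the right-hand side of \eqref{4.48}; adding the resulting bounds to the eight diagonal terms of \eqref{4.42} themselves gives the claim for $f\in{\mathcal S}({\R}^n;S^2{\R}^n)$, and hence for all $f\in H^s_t({\R}^n;S^2{\R}^n)$ by density.

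The computation is entirely routine and the only real issue is bookkeeping: for each of the eleven cross terms one must pick the shift $a$ and, where relevant, the $Z/Q/W$ estimate so as to steer both arguments onto admissible diagonal levels, and one must check along the way that no level drops below $(s+\tfrac12,t+\tfrac12)$. Equivalently --- and perhaps more cleanly --- one may note via Corollary~\ref{Cor4.3} that \eqref{4.42} says $\|f\|^2_{H^s_t}=\|(I^0\!f,I^1\!f,I^2\!f)\|^2_{{\mathcal H}^{2,s}_t}$, and that \eqref{4.48} merely asserts that this Hilbert norm is dominated by the ``diagonal'' norm on ${\mathcal S}(T{\S}^{n-1})^{\oplus3}$ formed from the eight squared norms on its right-hand side; the off-diagonal part of the quadratic form \eqref{4.46} is exactly what the Cauchy--Schwarz/Young estimates above control, so the domination passes to the completion by density. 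I expect this bookkeeping, rather than any analytic difficulty, to be the only obstacle.
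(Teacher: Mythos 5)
Your proposal is correct and follows essentially the same route as the paper, whose (very brief) proof is precisely to take the Reshetnyak identity \eqref{4.42} and estimate all scalar products via the Cauchy--Schwarz/interpolation inequality \eqref{4.37a}, together with the pointwise Fourier-side bounds on $Z$, $Q$, $W$ already used after \eqref{4.37}. Your fleshed-out bookkeeping of the eleven cross terms, the admissible shifts $a$, and the density argument is exactly the intended content of that one-sentence proof.
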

	
	\bigskip
	
	In principle, our approach works for every $m$, i.e., the Reshetnyak formula can be obtained which expresses the norm $\|f\|_{H^s_t({\R}^n;S^m{\R}^n)}$ through some norm of $(I^0\!f,\dots,I^m\!f)$ for a rank $m$ symmetric tensor field $f$. But the length of the formula grows fast with $m$. For example, the Reshetnyak formula contains more than 250 terms in the case of $m=3$. Unfortunately, we are not able to bring the formula into a compact form.

	\section{The two-dimensional case}\label{2D}
	
	Due to the existence of natural coordinates on $T{\S}^1={\S}^1\times\R$, all our formulas are simplified in the 2D-case. In particular, we do not need to use the operators $X_i,\Xi_i$ as well as the operators $Z,Q,W$ participating in \eqref{4.42}. Actually our work on the subject was started with considering the 2D-case.
	
	The coordinates $(p,\theta)$ on the two-dimensional manifold $T{\S}^1$ are defined by
	$$
	x=p(-\sin\theta,\cos\theta),\quad\xi=(\cos\theta,\sin\theta)\quad\mbox{for}\quad(x,\xi)\in T{\S}^1.
	$$
	The operators $X_i$ and $\Xi_i$ are easily expressed through $\frac{\partial}{\partial p}$ and $\frac{\partial}{\partial\theta}$
	\begin{equation}
	X_1=-\sin\theta\frac{\partial}{\partial p},\quad
	X_2=\cos\theta\frac{\partial}{\partial p},\quad
	\Xi_1=-\sin\theta\frac{\partial}{\partial\theta},\quad
	\Xi_2=\cos\theta\frac{\partial}{\partial\theta}.
	\label{5.1}
	\end{equation}
	In particular, the first term on the right-hand side of \eqref{4.37} can be written as
	\begin{equation}
	\sum\limits_{i=1}^2\|\Xi_i(I^0\!f)\|^2_{H^{s+1/2}_{t+1/2}(T{\S}^1)}=\Big\|\frac{\partial(I^0\!f)}{\partial\theta}\Big\|^2_{H^{s+1/2}_{t+1/2}(T{\S}^1)}.
	\label{5.2}
	\end{equation}
	
	The Fourier transform ${\mathcal S}(T{\S}^1)\rightarrow{\mathcal S}(T{\S}^1),\ \varphi\mapsto\hat\varphi$ is just the one-dimensional Fourier transform in the variable $p$ with the Fourier dual variable $q$, where $\theta$ stands as a parameter
	$$
	\hat\varphi(q,\theta)=\frac{1}{\sqrt{2\pi}}\int\limits_{-\infty}^\infty e^{-\textsl{i}\,q\,p}\,\varphi(p,\theta)\,\md p.
	$$
	The {\it Hilbert transform}
	$H:H^{s}_{t}(T{\S}^1)\rightarrow H^{s}_{t}(T{\S}^1)$
	is defined by
	$$
	\widehat{H \vp}(q,\theta)  = \mathrm{sgn}(q)\,\hat{\vp}(q,\theta).
	$$
	As is seen from \eqref{4.33} and \eqref{5.1},
	\begin{equation}
	Z=H\frac{\partial}{\partial\theta}.
	\label{5.3}
	\end{equation}
	
	With the help of \eqref{5.2} and \eqref{5.3}, the Reshetnyak formula \eqref{4.32} for vector fields takes the following form in the two-dimensional case:
	$$
	\begin{aligned}
	\|f\|^2_{H^s_t({\R}^2;{\C}^2)}=\frac{1}{2}\Big[&\|\partial_\theta(I^0\!f)\|^2_{H^{s+1/2}_{t+1/2}(T{\S}^1)}
	+\|I^0\!f\|^2_{H^{s+1/2}_{t+1/2}(T{\S}^1)}\\
	&+\|I^1\!f\|^2_{H^{s+3/2}_{t+3/2}(T{\S}^1)}
	+2\Re\big(\textsl{i}H\partial_\theta(I^0\!f),I^1\!f\big)_{H^{s+1}_{t+1}(T{\S}^1)}\Big],
	\end{aligned}
	$$
	and the stability estimate \eqref{4.41} takes the form
	$$
	\|f\|^2_{H^s_t({\R}^2;{\C}^2)}\le\|\partial_\theta(I^0\!f)\|^2_{H^{s+1/2}_{t+1/2}(T{\S}^1)}
	+\|I^0\!f\|^2_{H^{s+1/2}_{t+1/2}(T{\S}^1)}+\|I^1\!f\|^2_{H^{s+3/2}_{t+3/2}(T{\S}^1)}.
	$$
	
	Formulas \eqref{4.42} and \eqref{4.48} admit similar simplifications in the 2D-case. Omitting details, we present the result. the Reshetnyak formula \eqref{4.42} for second rank tensor fields takes the following form in the two-dimensional case:
	$$
	\begin{aligned}
	8\|f&\|^2_{H^s_t({\R}^2;S^2{\R}^2)}=
	\|I^2\!f\|^2_{H^{s+5/2}_{t+5/2}}
	+4\|\partial_\theta(I^1\!f)\|^2_{H^{s+3/2}_{t+3/2}}
	+2\|I^1\!f\|^2_{H^{s+3/2}_{t+3/2}}\\
	&+\|\partial^2_\theta(I^0\!f)\|^2_{H^{s+1/2}_{t+1/2}}
	-2\|\partial_\theta(I^0\!f)\|^2_{H^{s+1/2}_{t+1/2}}
	+\|pI^0\!f\|^2_{H^{s+3/2}_{t+3/2}}
	+8\|I^0\!f\|^2_{H^{s+1/2}_{t+1/2}}\\
	&-4\Re\big({\textsl i}H(I^2\!f),\partial_\theta(I^1\!f)\big)_{H^{s+2}_{t+2}}
	+2\Re\big({\textsl i}H(I^2\!f),pI^0\!f\big)_{H^{s+2}_{t+2}}
	-2\Re\big(I^2\!f,\partial^2_\theta(I^0\!f)\big)_{H^{s+3/2}_{t+3/2}}\\
	&-4\Re\big(I^2\!f,I^0\!f\big)_{H^{s+3/2}_{t+3/2}}
	-4\Re\big(\partial_\theta(I^1\!f),pI^0\!f\big)_{H^{s+3/2}_{t+3/2}}
	-4\Re\big({\textsl i}H\partial_\theta(I^1\!f),\partial^2_\theta(I^0\!f)\big)_{H^{s+1}_{t+1}}\\
	&+4\Re\big({\textsl i}H(I^1\!f),\partial_\theta(I^0\!f)\big)_{H^{s+1}_{t+1}}
	-2\Re\big({\textsl i}H\partial^2_\theta(I^0\!f),pI^0\!f\big)_{H^{s+1}_{t+1}}
	+4\Re\big({\textsl i}H(pI^0\!f),I^0\!f\big)_{H^{s+1}_{t+1}}.
	\end{aligned}
	$$
	Here the shorter notation $H^{s'}_{t'}$ is used instead of $H^{s'}_{t'}(T{\S}^1)$ on the right-hand side.
	The corresponding stability estimate is
	$$
	\begin{aligned}
	\|f\|^2_{H^s_t({\R}^2;S^2{\R}^2)}\le 6&\Big[
	\|I^2\!f\|^2_{H^{s+5/2}_{t+5/2}}
	+\|\partial_\theta(I^1\!f)\|^2_{H^{s+3/2}_{t+3/2}}
	+\|I^1\!f\|^2_{H^{s+3/2}_{t+3/2}}\\
	&+\|pI^0\!f\|^2_{H^{s+3/2}_{t+3/2}}
	+\|\partial^2_\theta(I^0\!f)\|^2_{H^{s+1/2}_{t+1/2}}
	+\|\partial_\theta(I^0\!f)\|^2_{H^{s+1/2}_{t+1/2}}
	+\|I^0\!f\|^2_{H^{s+1/2}_{t+1/2}}
	\Big].
	\end{aligned}
	$$
	
	Even in the 2D-case, the Reshetnyak formula for rank 3 symmetric tensor fields is too long to be presented here.

\end{document}